\newtheorem{theo}{Theorem}[section]
\newtheorem*{theo*}{Theorem}
\newtheorem{lem}[theo]{Lemma}
\newtheorem{definition}[theo]{Definition}
\newtheorem{cor}[theo]{Corollary}
\newtheorem{propo}[theo]{Proposition}
\newtheorem*{rmq}{Remark}
\let\ssection=\section
\renewcommand{\section}{\setcounter{equation}{0}\ssection}
\newtheorem*{namedtheorem}{\theoremname}
\newcommand{\theoremname}{Theorem}
\newenvironment{named}[1]{\renewcommand{\theoremname}{#1}\begin{namedtheorem}}{\end{namedtheorem}}
\newcommand{\BR}{\mathbb R}			
\newcommand{\BN}{\mathbb N}			
			\newcommand{\BZ}{\mathbb Z}
\newcommand{\CA}{\mathcal A}		
\newcommand{\CC}{\mathcal C}
		\newcommand{\CL}{\mathcal L}
\newcommand{\CM}{\mathcal M}
\newcommand{\Fm}{\mathfrak m}
\newcommand{\Fc}{\mathfrak c}
\newcommand{\FC}{\mathfrak C}
\newcommand{\D}{\partial}
\DeclareMathOperator{\Map}{Map}
\DeclareMathOperator{\height}{height}
\newcommand{\comment}[1]{}
\DeclareMathOperator{\supp}{Supp}
\newcommand{\fsubd}{\mathrel{{\scriptstyle\searrow}\kern-1ex^d\kern0.5ex}}
\newcommand{\bsubd}{\mathrel{{\scriptstyle\swarrow}\kern-1.6ex^d\kern0.8ex}}
\renewcommand{\epsilon}{\varepsilon}
\newcommand*{\newfaktor}[2]{
  \raisebox{0.5\height}{\ensuremath{#1}}
  \mkern-5mu\diagup\mkern-4mu
  \raisebox{-0.5\height}{\ensuremath{#2}}
}
\begin{document}

\title[]{Counting arcs of the same type}
\author{Marie Trin}
\address{UNIV RENNES, CNRS, IRMAR - UMR 6625, F-35000 RENNES, FRANCE}
\email{marie.trin@univ-rennes.fr}
\keywords{Counting problems, geodesic currents, Thurston measure, arcs, orthogeodesics, measure convergence}
\thanks{The author is supported by Centre Henri Lebesgue, program ANR-11-LABX-0020-0 and R\'{e}gion Bretagne }
\date{\today}

\begin{abstract} We prove a general counting result for arcs of the same type in compact surfaces. We also count infinite arcs in cusped surfaces and arcs in orbifolds. These theorems are derived from a result that guarantees the convergence of certain measures on the space of geodesic currents.
\end{abstract}

\maketitle

\section{Introduction}

The mapping class group $\Map(\Sigma)$ of a connected oriented surface $\Sigma$ with genus $g$ and $r$ boundary components acts on the set of weighted multicurves $\FC_m(\Sigma)$. The question of counting the elements in a given orbit has been studied by M. Mirzakhani for simple curves \cite{Mir1} and later for general curves \cite{Mir2}. She proved that for any complete finite area hyperbolic metric $X$ on $\Sigma$, any weighted multicurve $\gamma_0\in\FC_m(\Sigma)$ and any finite index subgroup $\Gamma$ of $\Map(\Sigma)$ there is a constant $\Fc^\Gamma_{g,r}(\gamma_0)$ such that
\begin{equation}
\lim\limits_{L\to\infty}\dfrac{\#\{\gamma\in\Gamma\cdot\gamma_0 | \ell_X(\gamma)\leq L \}}{L^{6g-6+2r}}=\Fc^\Gamma_{g,r}(\gamma_0)\cdot \Fm^{\Sigma}_{Thu}(\{\ell_X(\cdot)\leq 1\}),
\end{equation}
where $\Fm^{\Sigma}_{Thu}$ is the Thurston measure on the measured geodesic laminations of $\Sigma$.

Erlandsson-Souto \cite{ES} have extended this theorem into a general version where the hyperbolic length function can be replaced by other notions of complexity for the curves of $S$: that is any positive, continuous and homogeneous function on the geodesic currents of $S$ -- this applies for example to the length for any Riemannian metric on $\Sigma$ or its interior \cite{ES2}, the intersection number with a filling curve or current \cite{ES2}\cite{RS}, the word length \cite{VE} or the translation length in $\pi_1(\Sigma)$ when acting on a metric space \cite{EPS}...

Now, if $\Sigma$ is a compact connected oriented surface with non-empty boundary then one can consider the action of the mapping class group on the set of weighted multiarcs. In this setting, N. Bell \cite{Bell22} proved a result close to Mirzakhani's: if $X$ is a complete hyperbolic metric with geodesic boundary on $\Sigma$ then for every weighted multiarc $\alpha_0$ there is a constant $\Fc^{\Map}_{g,r}(\alpha_0)$ such that
\begin{equation} \label{Bell}
\lim\limits_{L\to\infty}\dfrac{\#\{\alpha\in\Map(\Sigma)\cdot\alpha_0 | \ell_X(\gamma)\leq L \}}{L^{6g-6+2r}}=\Fc^{\Map}_{g,r}(\gamma_0)\cdot\Fm^{\Sigma}_{Thu}(\{\ell_X(\cdot)\leq 1\}),
\end{equation}

where the length of an arc $\alpha$ is the length of the unique geodesic arc homotopic to $\alpha$ which is orthogonal to the geodesic boundary. The main goal of this paper is to obtain a general version of that result by proving a convergence result for a certain sequence of measures. Indeed, already Erlandsson-Souto generalization of Mirzakhani's results relies on the convergence of certain measures on the space of geodesic currents. Here it is key that curves can be seen as currents. To see arcs as currents we will work in the doubled surface $D\Sigma$ of $\Sigma$. Denoting by $\widehat{\alpha}$ the curve in $D\Sigma$ corresponding to the doubling of an arc $\alpha$ of $\Sigma$, we define for any weighted multiarc $\alpha_0$ and for any finite index subgroup $\Gamma$ of $\Map(\Sigma)$ the Radon  measures \begin{align}
	\nu^\Gamma_{\alpha_0,L}=\dfrac{1}{L^{6g-6+2r}}\sum\limits_{\alpha\in \Gamma\cdot\alpha_0}\delta_{\frac{1}{L}\widehat{\alpha}}
\end{align}
on the space $\CC(D\Sigma)$ of geodesic currents of $D\Sigma$. We prove that they converge when $L$ tends to infinity. The following is our main theorem.

\begin{named}{Theorem \ref{Main}}
	If $\Sigma$ is a compact connected oriented surface with non-empty boundary and negative Euler characteristic which is not a pair of pants, then for every weighted multiarc $\alpha_0 \in \CA_m(\Sigma)$, and every finite index subgroup $\Gamma$ of $\Map(\Sigma)$, there is $ \Fc^\Gamma_{g,r}(\alpha_0)>0$ such that
	\begin{equation*}
	\lim\limits_{L\to\infty}\nu^\Gamma_{\alpha_0,L} =\Fc^\Gamma_{g,r}(\alpha_0)\cdot\widehat{\Fm}_{Thu}^\Sigma. 
	\end{equation*}
	Here $\widehat{\Fm}_{Thu}^\Sigma$ is a Radon measure on $\CC(D\Sigma)$ and the convergence occurs with respect to the weak* topology on the set of Radon measures on  $\CC(D\Sigma)$.
\end{named}

\begin{rmq}
	In \cref*{Main}, the measure $\widehat{\Fm}_{Thu}^\Sigma$ is a specific measure on $\CC(D\Sigma)$ obtained from the Thurston measure on the space $\CM\CL(\Sigma)$ of measured laminations on $\Sigma$, see Section 2.2 for details.
\end{rmq}

We will get from \cref*{Main} a pretty general counting theorem for arcs. We will count arcs with bounded complexity where the complexity of an arc is given by functions on $\CA_m(\Sigma)$. We will say that such a function $F$ \textit{extends to currents} if there exists a continuous and homogeneous function on $\CC(D\Sigma)$ whose restriction to the set of arcs is $F$. If this function on currents is also called $F$, it means that $F(\widehat{\alpha})=2F(\alpha)$ for every arc $\alpha$. Since $\CC(\Sigma)\subset \CC(D\Sigma)$ we will say that $F$ is positive if it is a positive function on $\CC(\Sigma)$.

\begin{named}{Corollary \ref{Corollaire}}

		Let $\Sigma$ and $\Gamma$ be as in \cref*{Main}. For any weighted multiarc $\alpha_0\in\CA_m(\Sigma)$ and any function $F$ on $\CA_m(\Sigma)$ which extends to a positive function on currents we have 
		$$\lim\limits_{L\to\infty}\frac{\#\{\alpha\in\Gamma\cdot\alpha_0|F(\alpha)\leq L \}}{L^{6g-6+2r}}= \Fc^\Gamma_{g,r}(\alpha_0)\cdot\Fm_{Thu}^\Sigma(\{F(\cdot)\leq 1\}). $$
\end{named}

For example, $F$ can be the length function for any Riemannian metric with geodesic boundary on $\Sigma$ or the intersection number with a filling curve or current of $\Sigma$. 
\vspace*{0.3cm}

In Section 2, we will recall the needed background on geodesic currents and explain how we double $\Sigma$. Section 3 will be dedicated to the proof of a first counting theorem, \cref*{mainTheo1}, with a condition on intersection numbers. That theorem will be a key tool in order to prove \cref*{Main} in Section 4. In the last section we will prove \cref*{Corollaire}, we will also obtain counting results for bi-infinite arcs (\cref{CuspToCusp}) or for arcs on orbifolds (\cref{MainOrbifold}).
\vspace{0.5cm}

\textbf{Acknowledgements.} I am grateful to Viveka Erlandsson for our discussions which have initiated the work on this paper and to Nick Bell who introduced to me his method for counting arcs. I also want to thank Juan Souto for all our conversations and his suggestions related to the topic of this paper.

\section{Geodesic currents}
In this section, we describe some background on geodesic currents, explaining how currents of $\Sigma$ can be seen as currents of $D\Sigma$. For more details on the notion of geodesic currents, we refer to \cite{AL}, \cite{Bon} or \cite{ES}.

\subsection{Background}

Recall that for any (compact) connected, oriented hyperbolic surface $X$, a \textit{geodesic current} is a $\pi_1(X)$-invariant Radon measure on the set of bi-infinite unoriented geodesics of the universal cover $\widetilde{X}$. Note that all the hyperbolic structures on a given topological surface $\Sigma$ define the same set of geodesic currents. Hence, for $\Sigma$ a connected oriented surface with negative Euler characteristic we denote by $\CC(\Sigma)$ the space of geodesic currents of $\Sigma$. This space is endowed with the weak* topology and is then Hausdorff, metrizable, and second countable. Moreover, if the surface $\Sigma$ is compact then $\CC(\Sigma)$ is locally compact. The two main examples of geodesic currents we are interested in are weighted-multicurves and measured laminations.
\vspace{0.3cm}

 By a \textit{curve} we mean a free homotopy class of essential (\textit{ie.} non-null-homotopic and non-peripheral) closed curves. A \textit{weighted multicurve} is a formal finite sum of different curves with positive weights.  When a metric is fixed on $\Sigma$, a curve is canonically represented by its unique geodesic representative. Hence, it lifts to the universal cover into a discrete $\pi_1(\Sigma)$-invariant set of bi-infinite geodesics and the counting measure over this set is a geodesic current. The geodesic currents associated to a weighted multicurve is the corresponding sum of currents. In the following, we will denote by $\FC(\Sigma)$ and $\FC_m(\Sigma)$ the sets of curves and weighted multicurves of $\Sigma$. They will mostly be seen as subsets of $\CC(\Sigma)$.
\vspace{0.3cm}

The set $\CM\CL(\Sigma)$ of \textit{measured laminations} can also be seen as a subset of $\CC(\Sigma)$. Recall that a geodesic lamination is a closed subset of the interior of $\Sigma$ that can be foliated by disjoint simple geodesics. This definition ensures that the geodesic boundaries can-not be leaves of a lamination. A measured lamination is a geodesic lamination endowed with a transverse measure and it is that transverse measure that allows us to see measured laminations as geodesic currents. A point to notice is that, as geodesic currents of a closed surface, the measured laminations are characterised by having zero self intersection number. In the case of a surface with boundary, we have to add the condition that the mesured lamination give no weight to the boundary components:
\begin{equation} \label{MLinC}
	\CM\CL(\Sigma)=\{\mu\in\CC(\Sigma) | i(\mu,\mu)=0 \quad \text{and} \quad \mu(\widetilde{\D\Sigma})=0\}.
\end{equation}
\vspace{0.3cm}

 We will work with Radon measures on the space of currents. The one we will be mainly interested in is the \textit{Thurston measure} $\Fm_{Thu}^\Sigma$ on $\CM\CL(\Sigma)$ wich extends naturally to a measure on $\CC(\Sigma)$. Although we will not use its precise expression in the following, let us recall that this measure is given by
\begin{equation*}
	\Fm_{Thu}^\Sigma = \lim\limits_{L\to\infty} \frac{1}{L^{6g-6+2r}}\sum\limits_{\gamma\in\CM\CL_\BZ(\Sigma)}\delta_{\frac{1}{L}\gamma},
\end{equation*}
where $\CM\CL_\mathbb{Z}(\Sigma)$ is the set of integral weighted simple multicurves. For more details on the Thurston measure one can refer to \cite{MoTe} or \cite{FAH}.
\vspace*{0.4cm}

\subsection{Doubling the surface}

In the following, $\Sigma$ is a compact connected oriented surface with $r>0$ boundary components and genus $g$ such that $2-2g-r<0$ and $(g,r)\neq (0,3)$. We endow it with a fixed hyperbolic metric with geodesic boundary. Note that the orientation on $\Sigma$ induces an orientation of the geodesic boundary components. \textit{An arc} $\alpha$ in $\Sigma$ is a free homotopy class of oriented segments based on boundary components and we identify two arcs that deffer from the orientation. A \textit{weighted multiarc} $\alpha$ is a finite sum of arcs with positive weights. We ask the arcs not to be homotopic to a segment of a boundary component. We will denote by $\CA(\Sigma)$ the set of arcs and $\CA_m(\Sigma) $ the set of weighted multiarcs. Note that since a metric is fixed, an arc $\alpha$ is canonically represented by the unique orthogeodesic of the homotopy class. 
\vspace{0.3cm}

As we mentioned in the introduction, we need to be able to interpret arcs as currents. However, it is not possible to do it by using the same process as for curves. This is why we will work on the doubled surface $D\Sigma$. The surface $D\Sigma$ is the closed oriented surface of genus $g(D\Sigma)=2g+r-1$ corresponding to the doubling of $\Sigma$. In that setting, the arcs of $\Sigma$ will be in bijection with certain symmetric curves of $D\Sigma$.

\begin{figure}[!ht]
	\centering
	\includegraphics[scale=0.7]{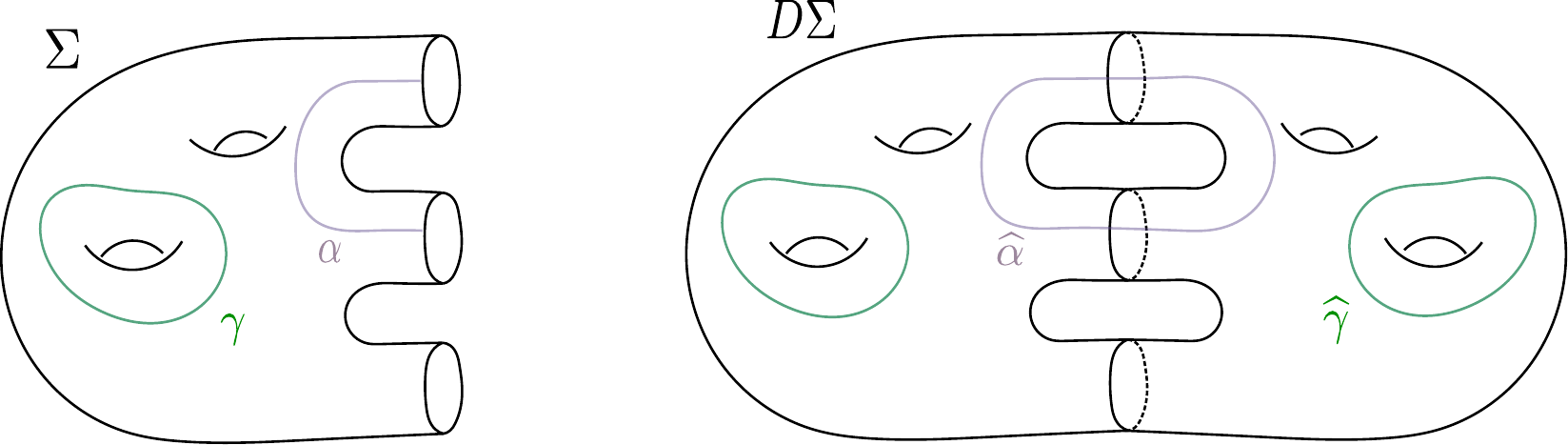}  
	\caption{From $\Sigma$ to $D\Sigma$}
	\label{doubling}
\end{figure}

We can embed two copies of $\Sigma$ into $D\Sigma$ such that they cover $D\Sigma$ and meet pointwise along their boundary components. We will denote by $\Sigma^+$ and $\Sigma^-$ these two copies, $i^+$ and $i^-$ the associated embeddings, and $\sigma : D\Sigma \longmapsto D\Sigma$ the involution that exchanges $\Sigma^+$ and $\Sigma^-$ (it is an orientation reversing map which is the identity when restricted to the boundary of $\Sigma$). The embeddings $i ^+$ and $i^-$ naturally extend to embdeddings from the geodesic currents of $\Sigma$ to the geodesic currents of $D\Sigma$ hence, for an element $\mu$ in $\CC(\Sigma)$ we will denote by $\widehat{\mu}$ its doubled version:
\begin{equation}\label{hat}
	\begin{array}{ccccl}
	\widehat{\cdot} & : & \CC(\Sigma) & \hookrightarrow & \CC(D\Sigma) \\
	& & \mu & \mapsto &  i^+(\mu) + i^-(\mu). 
	\end{array}
\end{equation}

To obtain \cref{Main}, we need to see the Thurston measure as a Radon measure on  $\CC(D\Sigma)$. To do so, we pushfoward the measure through the hat operator to obtain a mesure $\widehat{\Fm}_{Thu}^\Sigma$ in $\CC(D\Sigma)$ surpported by $\widehat{\CM\CL}(\Sigma).$

Remark that the elements in the image of the hat operator are fixed by the involution $\sigma$. More generally, we will call \textit{symmetric} the elements fixed by $\sigma$ and we will denote by $\FC^\sigma(\D\Sigma)$ the symmetric curves of $D\Sigma$ and $\CM\CL^\sigma(D\Sigma)$ the symmetric measured laminations. Remark that the set of symmetric measured laminations is larger than the image of $\CM\CL(\Sigma)$ by the hat operator: the embedded boundary components of $\Sigma$ are symmetric but are not represented by elements in the image of $\CM\CL(\Sigma)$ by the hat operator. We record this fact for later reference.

\begin{propo}\label{caracterisation}
	A symmetric measured lamination $\Lambda\in\CM\CL^\sigma(D\Sigma)$ is an element of $\widehat{\CM\CL}(\Sigma)$ if and only if
	\begin{enumerate}
		\item it does not have connected components of $\D\Sigma$ as leaves, 
		\item $i(\Lambda,\partial \Sigma)=0$, where $i(\cdot,\cdot)$ is the intersection form between currents. \hspace*{3.4cm}\qedsymbol
	\end{enumerate}
\end{propo}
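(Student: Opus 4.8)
The plan is to characterize which symmetric measured laminations of $D\Sigma$ arise by doubling a measured lamination of $\Sigma$. The forward direction is immediate: if $\Lambda = \widehat{\mu}$ for some $\mu \in \CM\CL(\Sigma)$, then by definition $\mu$ is a lamination in the \emph{interior} of $\Sigma$, so neither $i^+(\mu)$ nor $i^-(\mu)$ can have a boundary component of $\Sigma$ as a leaf, giving (1); and since $\mu(\widetilde{\D\Sigma}) = 0$ by \eqref{MLinC}, the doubled current $\widehat\mu$ gives zero weight to each (lifted) copy of $\D\Sigma$ inside $D\Sigma$, whence $i(\widehat\mu, \partial\Sigma) = 0$, giving (2). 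One should be slightly careful here to record that $i(\Lambda,\partial\Sigma)$ means the sum of intersection numbers with the embedded boundary curves, viewed as a symmetric multicurve in $D\Sigma$.

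For the converse, suppose $\Lambda \in \CM\CL^\sigma(D\Sigma)$ satisfies (1) and (2). The idea is to cut $\Lambda$ along the multicurve $\partial\Sigma \subset D\Sigma$ and push the symmetric halves down to $\Sigma$. Because $\sigma$ fixes $\partial\Sigma$ pointwise and exchanges $\Sigma^+$ with $\Sigma^-$, the $\sigma$-invariance of $\Lambda$ means $\Lambda$ is determined by its restriction $\Lambda^+ := \Lambda|_{\Sigma^+}$, and $\Lambda|_{\Sigma^-} = \sigma_*\Lambda^+$. First I would argue that condition (2), $i(\Lambda,\partial\Sigma) = 0$, forces every leaf of $\Lambda$ to be disjoint from $\partial\Sigma$: a transverse intersection of a leaf with $\partial\Sigma$ would contribute positively to the intersection form (geodesics are in minimal position), so no leaf crosses $\partial\Sigma$. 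Combined with condition (1), which rules out leaves lying \emph{on} $\partial\Sigma$, we conclude every leaf of $\Lambda$ is contained in the interior of $\Sigma^+$ or the interior of $\Sigma^-$. Hence $\Lambda^+$ is a measured geodesic lamination supported in the interior of $\Sigma^+ \cong \Sigma$, i.e. an element $\mu \in \CM\CL(\Sigma)$ with $\mu(\widetilde{\D\Sigma}) = 0$ automatically. By $\sigma$-invariance $\Lambda = i^+(\mu) + \sigma_* i^+(\mu) = i^+(\mu) + i^-(\mu) = \widehat\mu$, as desired.

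The main technical point — and the step I'd expect to require the most care — is the passage from ``$i(\Lambda,\partial\Sigma) = 0$'' to ``no leaf of $\Lambda$ meets $\partial\Sigma$ transversally.'' For a single simple closed geodesic $\beta$ and a measured lamination, $i(\Lambda,\beta) = 0$ is indeed equivalent to $\Lambda$ having no leaf crossing $\beta$ (this is standard: the intersection form is computed by a genuinely transverse count since leaves are geodesics, so it vanishes iff there is no crossing), but one must invoke the right reference (e.g. the intersection-form properties recalled in \cite{Bon} or \cite{AL}) and make sure the case where $\Lambda$ could have a leaf spiraling onto $\partial\Sigma$ is excluded — here condition (1) together with the fact that $\partial\Sigma$ consists of simple closed geodesics handles it, since a leaf spiraling onto $\partial\Sigma$ would still not contribute to $i(\Lambda,\partial\Sigma)$ but \emph{would} be a problem for pushing down; however such a leaf, being disjoint from $\partial\Sigma$ and accumulating on it, lies in one side, so it still descends to a lamination of $\Sigma$ in the interior. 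I would double-check that this last configuration is compatible with $\mu$ being a measured lamination of $\Sigma$ in the sense of \eqref{MLinC}, which it is, since $\mu(\widetilde{\D\Sigma}) = 0$ only asks that the boundary itself carry no transverse mass. Everything else is bookkeeping about the embeddings $i^\pm$ and the involution $\sigma$.
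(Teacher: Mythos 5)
Your argument is correct and is exactly the (omitted) argument the paper intends: the forward direction is immediate from the definitions, and for the converse the point is that conditions (1) and (2) together force every leaf of $\Lambda$ to be disjoint from $\partial\Sigma$, hence contained in the interior of $\Sigma^+$ or $\Sigma^-$, so that $\sigma$-invariance gives $\Lambda = i^+(\mu)+i^-(\mu)=\widehat{\mu}$ for the pushed-down lamination $\mu\in\CM\CL(\Sigma)$. The one place you hedge --- leaves spiraling onto $\partial\Sigma$ --- can be dispatched more cleanly: since $\Lambda$ is a closed subset of $D\Sigma$, a leaf accumulating on a component $\beta$ of $\partial\Sigma$ would force $\beta\subset\Lambda$ and hence $\beta$ to be a leaf of $\Lambda$, contradicting (1); moreover such non-recurrent leaves never lie in the support of a transverse measure, so they do not occur in a measured lamination in the first place.
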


At last, for any (multi)arc $\alpha\in\CA(\Sigma)$ its two copies $i^+(\alpha)$ and $i^-(\alpha)$ into $D\Sigma$ meet at their endpoints and their union forms a symmetric (multi)curve of $D\Sigma$. We will denote by $\widehat{\alpha}$ that curve: it is not an image by the above hat operator but this notation is consistant with the one for curves or measured lamination as their image through $\widehat{\cdot}$ are the union of their two copies (see \cref{doubling}). The curves of $D\Sigma$ are geodesic currents of $D\Sigma$, so, the doubling process implies that we are now able to see arcs as geodesic currents.	

\begin{equation}\label{hatArcs}
	\begin{array}{cccclcl}
	\widehat{\cdot} & : & \CA_m(\Sigma) & \hookrightarrow & \FC_m^\sigma(D\Sigma) & \hookrightarrow& \CC(D\Sigma) \\
	& & \alpha & \mapsto &  \widehat{\alpha} & \mapsto &\widehat{\alpha}.
	\end{array}
\end{equation}

\section{Counting problems with a bound on the intersection number}

Our next goal is to prove a particular version of \cref{Corollaire}, namely the fact that we can count arcs when we measure them using the intersection with a filling curve. Our argument is inspired by those of Bell \cite{Bell22}\cite{BellThesis}. Bell's approach consists of associating to each arc $\alpha$ the curve $\gamma_\alpha = \alpha^{-1} \cdot a_2 \cdot \alpha\cdot a_1$, where $a_2$ and $a_1$ are the boundary components at the end and begining of $\alpha$ ---  whatever the chosen orientation for $\alpha$, the associated curve is the same. It turns out that $\alpha$ and $\gamma_\alpha$ are closely related and we will be able to extend the counting results for $\gamma_\alpha$ to results for $\alpha$.

The above construction of $\gamma_\alpha$ for a given curve induces a map from the set of weighted multiarcs to the set of weighted multicurves of $\Sigma$:
\begin{center}
	\begin{tabular}{cccrcl}
		$I$ &:& $\CA_m(\Sigma)$ & $\to$ & $\FC_m(\Sigma)$ \\
		& & $\sum a_i\alpha_i$ & $\mapsto$ & $\sum a_i \gamma_{\alpha_i}$.	
	\end{tabular}
\end{center}

The first point to notice about the map $I$ is that it is equivariant with respect to the mappping class group, meaning that for any $\phi \in\Map(\Sigma)$ and $\alpha\in\CA_m(\Sigma)$
\begin{equation*}
	\phi\cdot I(\alpha) = I(\phi\cdot\alpha).
\end{equation*}

Secondly, we can prove that $\alpha$ and $I(\alpha)$ are nearby in the sense that they intersect curves essentially in the same way.

\begin{lem} \label{Lemme}
	For any $\alpha\in\CA_m(\Sigma)$ there is $d_\alpha\in\BN$ such that if $\mu\in\FC_m(\Sigma)$ is a weighted-multicurve then  
	\begin{equation} \label{IneqInter}
		| i(I(\alpha),\mu)-2i(\alpha,\mu)|\leq 2d_\alpha i(\mu,\mu)
	\end{equation}
where $i(\cdot,\cdot)$ is the geometric intersection number.
Moreover, $d_\alpha$ is invariant under the action of the mapping class group on $\CA_m(\Sigma)$.
\end{lem}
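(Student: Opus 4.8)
The plan is to reduce to a single arc and then to compare $\gamma_\alpha$ with a curve obtained by ``doubling'' the orthogeodesic representative $g_\alpha$ of $\alpha$.

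\emph{Reduction to one arc.} Write $\alpha=\sum_j a_j\alpha_j$. Since $I(\alpha)=\sum_j a_j\gamma_{\alpha_j}$ and the geometric intersection number is additive (with weights) in the multicurve variable, one has
\[
 i\big(I(\alpha),\mu\big)-2\,i(\alpha,\mu)=\sum_j a_j\big(i(\gamma_{\alpha_j},\mu)-2\,i(\alpha_j,\mu)\big),
\]
so it suffices to prove the estimate for a single arc and then take $d_\alpha=\big\lceil\sum_j a_j\,d_{\alpha_j}\big\rceil$; the $\Map(\Sigma)$-invariance of $d_\alpha$ follows from that of each $d_{\alpha_j}$ together with the equivariance $\phi\cdot I(\beta)=I(\phi\cdot\beta)$. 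From now on $\alpha$ is a single arc with orthogeodesic $g_\alpha$ joining boundary components $a_1$ and $a_2$ (possibly $a_1=a_2$), and $m$ is the geodesic representative of the weighted multicurve $\mu$.

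\emph{A doubled representative and the upper bound.} The curve $\gamma_\alpha$ is freely homotopic to the boundary of a regular neighbourhood of $a_1\cup g_\alpha\cup a_2$, hence to the closed curve $c$ that runs along $g_\alpha$, loops once around $a_2$ very close to the boundary, runs back along $g_\alpha$, and loops once around $a_1$ very close to the boundary. Since $m$ stays at positive distance from $\partial\Sigma$, the two boundary--hugging loops of $c$ can be chosen disjoint from $m$, so $c$ meets $m$ in exactly $2\,\#(g_\alpha\cap m)=2\,i(\alpha,\mu)$ points, whence $i(\gamma_\alpha,\mu)\le 2\,i(\alpha,\mu)$. This already yields $i(I(\alpha),\mu)\le 2\,i(\alpha,\mu)$ for multiarcs, and reduces the Lemma to the lower bound $2\,i(\alpha,\mu)-i(\gamma_\alpha,\mu)\le 2\,d_\alpha\,i(\mu,\mu)$.

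\emph{The lower bound.} One must control how many of the $2\,i(\alpha,\mu)$ crossings of $c$ with $m$ are lost when $c$ is tightened to $\bar\gamma_\alpha$. No crossing is lost along a part of $c$ that is embedded: a single copy of the geodesic segment $g_\alpha$ bounds no bigon with the geodesic $m$, and the boundary loops meet $m$ not at all. The only places where $c$ fails to be embedded are small neighbourhoods of the $i(\alpha,\alpha)$ self-intersection points of $g_\alpha$ (a number depending only on the $\Map(\Sigma)$-orbit of $\alpha$) and, when $a_1=a_2$, a neighbourhood of that boundary component where the two copies of $g_\alpha$ are joined by two parallel loops. Near such a place $\gamma_\alpha$ looks like a resolution of a self-crossing of $g_\alpha$, so an arc of $m$ passing ``diagonally'' through that region crosses $g_\alpha$ twice but $\gamma_\alpha$ only twice, producing a deficit of $2$; I would bound the number of such diagonal arcs by $i(\mu,\mu)$ up to a constant depending only on the topological type of $\alpha$, using that two such arcs crossing the same region in the two different diagonal directions are forced to intersect, and that a family of them crossing it in the same direction produces, between two consecutive ones, a loop made of an arc of $m$ and an arc of $g_\alpha$ that cannot be removed and hence detects a self-intersection of $m$. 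Summing the deficits over the $i(\alpha,\alpha)$ self-intersection points of $g_\alpha$ (and the exceptional boundary component) gives the claim with $d_\alpha$ a function of $i(\alpha,\alpha)$ alone, hence $\Map(\Sigma)$-invariant.

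\emph{Main obstacle.} Everything except this last step is soft. The crux is turning the qualitative statement ``intersections are only lost near the self-overlaps of the doubled curve'' into the quantitative bound $2\,i(\alpha,\mu)-i(\gamma_\alpha,\mu)\le 2\,d_\alpha\,i(\mu,\mu)$, i.e. producing a bookkeeping that charges each unit of lost intersection to a self-intersection of $\mu$ with multiplicity controlled by the topological type of $\alpha$. This is the only place where a genuine, somewhat delicate, surface-topology analysis near the self-intersections of $g_\alpha$ (and near $a_1,a_2$) is required; the precise form of the charging argument is what one should expect to have to set up with care.
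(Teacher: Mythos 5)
Your reduction to a single arc and your upper bound $i(\gamma_\alpha,\mu)\le 2\,i(\alpha,\mu)$ are both correct and match the paper. But the heart of the lemma is the lower bound $2\,i(\alpha,\mu)-i(\gamma_\alpha,\mu)\le 2d_\alpha\, i(\mu,\mu)$, and there you have only a plan, not a proof — as you yourself acknowledge in your final paragraph. The "charging" scheme you sketch is not yet an argument: the claim that two strands of $m$ crossing a self-overlap region of the doubled curve in the same diagonal direction produce, between consecutive ones, a loop that "detects a self-intersection of $m$" is exactly the kind of statement that can fail — parallel geodesic strands of $m$ passing through the same small region need not intersect each other, and the loop you describe (an arc of $m$ followed by an arc of $g_\alpha$) is not obviously essential, nor is it clear why its essentiality would force a self-crossing of $m$ rather than, say, a crossing of $m$ with $g_\alpha$ elsewhere. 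Note also that for $\mu$ simple your bound must degenerate to the exact equality $i(\gamma_\alpha,\mu)=2\,i(\alpha,\mu)$, so your bigon analysis would have to rule out \emph{every} lost intersection in that case, not merely bound their number. Finally, your expectation that $d_\alpha$ is a function of $i(\alpha,\alpha)$ alone is speculative and not needed; all that is required is $\Map(\Sigma)$-orbit invariance.

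The paper sidesteps this entire local analysis with a global trick you are missing: the immersed pair of pants $P\subset\Sigma$ with boundary $a_1\cup a_2\cup \gamma_\alpha$ containing $\alpha$ as its simple seam is promoted, using the fact that surface groups are LERF, to an \emph{embedded} pair of pants in a finite cover of degree $d_\alpha$. In that cover one compares $\tilde\alpha$, $I(\tilde\alpha)$ and the full preimage $\tilde{\tilde\mu}$ of $\mu$: every intersection of $\tilde{\tilde\mu}$ with the simple seam $\tilde\alpha$ is accounted for either by a strand entering and leaving the embedded pair of pants (contributing two intersections with $I(\tilde\alpha)$) or by a strand wrapping around a leg (contributing a self-intersection of $\tilde{\tilde\mu}$), giving $i(\tilde\alpha,\tilde{\tilde\mu})\le \tfrac12\, i(I(\tilde\alpha),\tilde{\tilde\mu})+i(\tilde{\tilde\mu},\tilde{\tilde\mu})$. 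Pushing this down via $i(\tilde{\tilde\mu},\tilde{\tilde\mu})=d_\alpha\, i(\mu,\mu)$ yields the lemma with $d_\alpha$ the covering degree, which is manifestly constant on the $\Map(\Sigma)$-orbit. The embeddedness is what makes "enters and leaves" and "wraps around a leg" meaningful; working directly in $\Sigma$, where the pair of pants is only immersed, is precisely what makes your bookkeeping intractable. You should either import this covering-space argument or supply a complete combinatorial proof of the charging step, which at present is a genuine gap.
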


\begin{proof}
		For any arc $\alpha\in\CA(\Sigma)$ there is an immersion, unique up to homotopy, that sends the pair of pants $P$ into $\Sigma$ in such a way that the image of the boundary components are $a_1$, $a_2$ and $I(\alpha)$, and such that $\alpha$ is the image of the unique simple arc between the preimage of $a_1$ and $a_2$.

	Let $H$ be the subgroup of $\pi_1(\Sigma)$ given by the image of $\pi_1(P)$ under the immersion. The group $H$ is the free group of rank 2 and the pair of pants lifts homeomorphically to $\newfaktor{\widetilde{\Sigma}}{H}$ as a compact subsurface. Surface groups being LERF \cite{scott}, there is a finite index subgroup $K$ of $\pi_1(\Sigma)$ containing $H$ such that $P$ lifts to $\newfaktor{\widetilde{\Sigma}}{K}$. This means that there is a cover of $\Sigma$, of degree $d_\alpha<\infty$, in which some well chosen lifts $\tilde{a}_1$, $\tilde{a}_2$, and $\widetilde{I(\alpha)}$ of $a_1$, $a_2$, and $I(\alpha)$ are the three boundary components of an embedded pair of pants and such that the unique simple arc between $\tilde{a}_1$ and $\tilde{a}_2$ is a lift $\tilde{\alpha}$ of $\alpha$ (see \cref{lift}).  If $\mu$ is a weighted multicurve of $\Sigma$ we denote by $\tilde{\tilde{\mu}}$ its preimage inside this cover.
	\vspace*{0.5cm}
	
	\begin{figure}[!ht]
		\centering
		\includegraphics[scale=0.4]{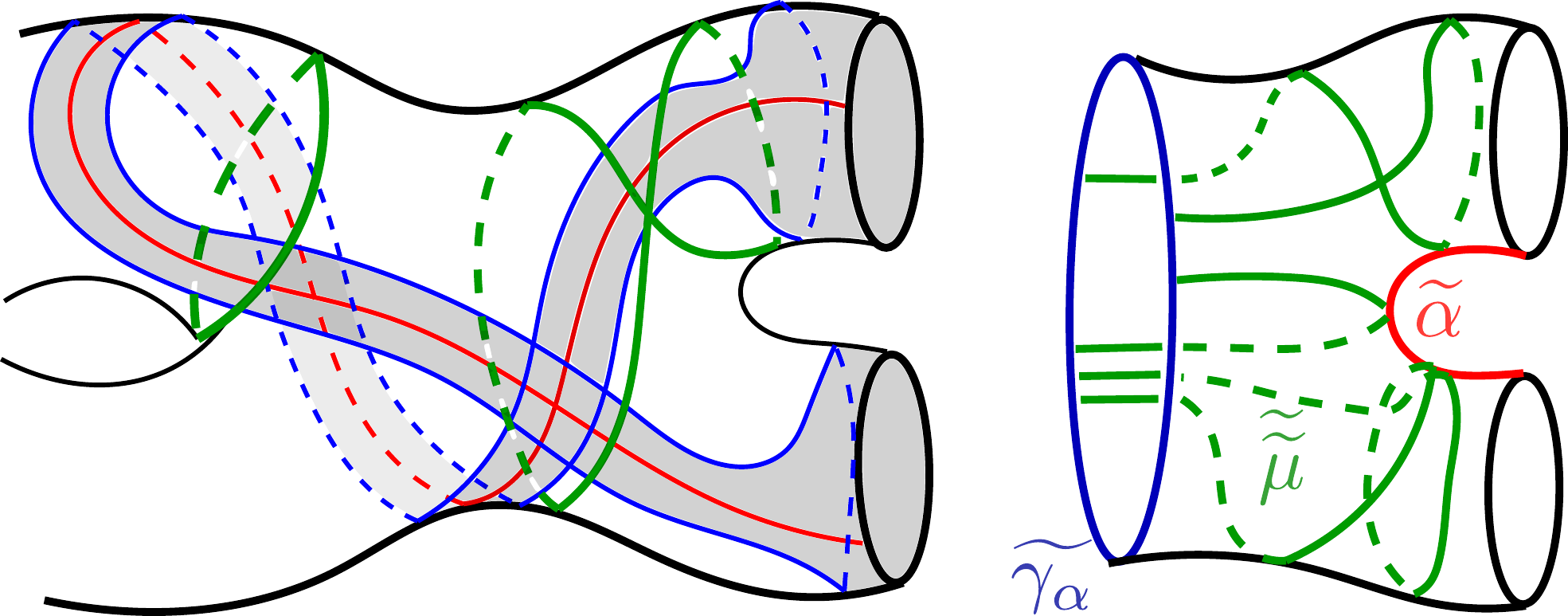}  
		\caption{Immersed and embedded pair of pants}
		\label{lift}
	\end{figure}
	\vspace*{0.5cm}

	For any finite degree cover of $\Sigma$, if $\tilde{\alpha}$ is a lift of $\alpha$, then there is a lift $\widetilde{I(\alpha)}$ of $I(\alpha)$ equal to $I(\tilde{\alpha})$ and it is the case for the lifts chosen above. So, in the previous cover we have the following relations between intersection numbers:
	\begin{enumerate}
		\item $i(I(\alpha),\mu)\leq 2i(\alpha,\mu)$ by construction of $I(\alpha)$,
		\item $i(I(\tilde{\alpha}),\tilde{\tilde{\mu}}) =  i(I(\alpha),\mu)$, $i(\tilde{\alpha},\tilde{\tilde{\mu}})= i(\alpha,\mu)$ and $i(\tilde{\tilde{\mu}},\tilde{\tilde{\mu}})=d_\alpha\cdot i(\mu,\mu)$ by definition of a covering map,
		\item $i(\tilde{\alpha},\tilde{\tilde{\mu}}) \leq \frac{1}{2}\cdot i(I(\tilde{\alpha}),\tilde{\tilde{\mu}}) + i(\tilde{\tilde{\mu}},\tilde{\tilde{\mu}})$ because each intersection between $\tilde{\alpha}$ and $\tilde{\tilde{\mu}}$ comes from a component of $\tilde{\tilde{\mu}}$ that enters and leaves the pair of pants, or a component that turns around the legs and which have some self intersections. A self intersection leads to at most one intersection with $\tilde{\alpha}$ and a pair of intersections with the boundary (\textit{ie} intersections with $I(\tilde{\alpha})$) to at most one intersection with $\tilde{\alpha}$.
	\end{enumerate}

All in all, it occurs that
\begin{equation}
	-2\cdot d_\alpha\cdot i(\mu,\mu) \leq 0 \leq 2\cdot i(\alpha,\mu)-i(I(\alpha),\mu) \leq  2\cdot d_\alpha\cdot  i(\mu,\mu),
\end{equation}
 and \cref{IneqInter} follows for $\alpha$.

Moreover, since any mapping class $\phi$ induces a bijection $\phi_* : \pi_1(\Sigma)\to\pi_1(\Sigma)$ we can choose $d_\alpha$ to be the same for every arc in a given orbit. We have proved the lemma for arcs and the triangle inequality gives the results for weighted multiarcs.\end{proof}

We can consider the restriction of $I$ to the orbit of a given weighted multiarc $\alpha_0$:
\begin{center}
	\begin{tabular}{cccrcl}
		$I_{|\Map(\Sigma)\cdot\alpha_0}$ &:& $\Map(\Sigma)\cdot\alpha_0$ & $\to$ & $\Map(\Sigma)\cdot I(\alpha_0)$ \\
		& & $\alpha$ & $\mapsto$ & $I(\alpha)$,
	\end{tabular}
\end{center}
equivariance under $\Map(\Sigma)$ and \cref{Lemme}  imply that this map is finite-to-one.

\begin{propo}\label{kalpha0}
	For all $\alpha_0\in\CA_m(\Sigma)$, the map $I_{|\Map(\Sigma)\cdot\alpha_0}$ is well defined and $k(\alpha_0)$-to-$1$ for some $k(\alpha_0)\in\BN$ which depends only on the type of $\alpha_0$. \hspace*{8cm}\qedsymbol
\end{propo}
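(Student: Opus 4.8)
The plan is to show that the fibers of $I_{|\Map(\Sigma)\cdot\alpha_0}$ are finite with a bound depending only on the topological type of $\alpha_0$, by combining the $\Map(\Sigma)$-equivariance of $I$ with the near-equality of intersection numbers from \cref{Lemme}. First I would reduce to counting, for a \emph{single} fixed curve $\gamma \in \Map(\Sigma)\cdot I(\alpha_0)$, the number of weighted multiarcs $\alpha \in \Map(\Sigma)\cdot\alpha_0$ with $I(\alpha) = \gamma$; equivariance ($\phi\cdot I(\alpha) = I(\phi\cdot\alpha)$) guarantees that all fibers of the restricted map have the same cardinality, so it suffices to bound one of them, and that bound will then automatically depend only on the orbit, i.e.\ the type, of $\alpha_0$. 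Well-definedness is immediate since $I$ is $\Map(\Sigma)$-equivariant, so $I(\Map(\Sigma)\cdot\alpha_0) \subseteq \Map(\Sigma)\cdot I(\alpha_0)$.

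Next I would show the fiber is finite. Suppose $\alpha \in \Map(\Sigma)\cdot\alpha_0$ with $I(\alpha) = \gamma$. By \cref{Lemme}, with $d := d_{\alpha_0}$ the common constant for the whole orbit, we have
\begin{equation*}
    |i(\gamma,\mu) - 2i(\alpha,\mu)| \le 2d\, i(\mu,\mu)
\end{equation*}
for every weighted multicurve $\mu$; in particular, taking $\mu$ to range over simple closed curves (where $i(\mu,\mu)=0$) gives $i(\alpha,\mu) = \tfrac12 i(\gamma,\mu)$ for all simple $\mu$. Thus the function $\mu \mapsto i(\alpha,\mu)$ on simple closed curves is completely determined by $\gamma$, hence independent of which $\alpha$ in the fiber we chose. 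Since an arc (equivalently, a multicurve — here one can pass to the doubled curve $\widehat\alpha$ in $D\Sigma$, or argue directly) is determined up to finitely many possibilities by its geometric intersection numbers with a fixed finite filling collection of simple closed curves, and all these numbers are bounded in terms of $\gamma$, the fiber is finite. More carefully: all arcs $\alpha$ in the fiber have bounded intersection with a fixed filling system of curves, and only finitely many homotopy classes of multiarcs of a given topological type satisfy such bounds, so the fiber has cardinality at most some $k$ depending only on the type of $\alpha_0$ and on $\gamma$ — and since all fibers are equinumerous by equivariance, $k$ depends only on the type of $\alpha_0$.

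Finally I would package this: set $k(\alpha_0)$ to be this common fiber cardinality; it is a positive integer (the map is onto its image, so fibers are nonempty) and it depends only on the type of $\alpha_0$ because $\Map(\Sigma)$ acts transitively on each orbit and $d_{\alpha_0}$, the filling system, and hence the counting bound are orbit-invariant. The main obstacle is the finiteness step — specifically, making precise and justifying that "a multiarc is determined up to finitely many choices by bounded intersection numbers with a filling family." The cleanest route is to invoke the classical fact (e.g.\ via train tracks or via the properness of the map $\mathcal C(\Sigma)\to\mathbb R^N$ given by intersection with a filling system) that the set of currents with a uniform bound on intersection with a filling collection is compact, and that integral multicurves/multiarcs form a discrete subset, so only finitely many lie in any such compact set; combined with the type constraint this yields the uniform bound $k(\alpha_0)$.
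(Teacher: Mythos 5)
Your argument is correct and follows exactly the route the paper intends: the paper itself only records the one-line justification that equivariance of $I$ together with \cref{Lemme} imply the map is well defined and finite-to-one, and your write-up fills in those same two ingredients (equivariance gives well-definedness and equinumerous fibers; the lemma applied to simple curves pins down the intersection numbers of any $\alpha$ in a fiber with a filling system, and discreteness of multiarcs of a fixed type under bounded intersection gives finiteness). No substantive difference in approach.
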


We are now able to count arcs with respect to the intersection number with a curve. More precisely, we will count with respect to the intersection number with a \textit{filling multicurve} -- a curve that cuts the surface into disks and annulus -- which ensures that we count finitely many arcs at each step.

\begin{theo}\label{mainTheo1}
	If $\Sigma$ is a compact connected oriented surface with non-empty boundary and negative Euler characteristic which is not a pair of pants, and $\Gamma$ is any finite index subgroup of $\Map(\Sigma)$, then for any weighted multiarc $\alpha_0$ on $\Sigma$ and for any $\mu\in\FC_m(\Sigma)$ filling multicurve we have
	\begin{equation*}
		\lim\limits_{L\to\infty}\dfrac{\#\{\alpha\in \Gamma\cdot \alpha_0|i(\mu,\alpha)\leq L\}}{L^{6g-6+2r}}=\Fc^\Gamma_{g,r}(\alpha_0)\cdot\Fm_{Thu}^\Sigma(\{i(\mu,\cdot)\leq 1 \}).
	\end{equation*}
Here, $\Fc^\Gamma_{g,r}(\alpha_0)$ is a constant  fixed by the type of $\alpha_0$, the group $\Gamma$ and the topology of $\Sigma$.
\end{theo}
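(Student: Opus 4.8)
The plan is to transfer the counting problem for arcs $\alpha\in\Gamma\cdot\alpha_0$ to a counting problem for the associated curves $I(\alpha)\in\Gamma\cdot I(\alpha_0)$, for which the Erlandsson--Souto generalization of Mirzakhani's theorem already applies. First I would record that, by \cref{kalpha0}, the map $I$ restricted to $\Gamma\cdot\alpha_0$ is $k(\alpha_0)$-to-$1$ onto $\Gamma\cdot I(\alpha_0)$ (the restriction to a finite index subgroup $\Gamma$ only changes orbits, not the finite-to-one constant, up to absorbing a factor). Hence if we could count arcs with $i(\mu,\alpha)\le L$ by counting curves with $i(\mu,I(\alpha))\le 2L$, we would be done up to a factor of $k(\alpha_0)$ and a factor $2^{6g-6+2r}$ coming from rescaling $L\mapsto 2L$; the resulting constant is absorbed into the definition of $\Fc^\Gamma_{g,r}(\alpha_0)$.

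The substantive point is that $i(\mu,\alpha)$ and $\tfrac12 i(\mu,I(\alpha))$ are not equal, but \cref{Lemme} tells us $|i(I(\alpha),\mu)-2i(\alpha,\mu)|\le 2d_{\alpha_0}\, i(\mu,\mu)$, where $d_{\alpha_0}$ is a constant depending only on the type of $\alpha_0$ (in particular constant on the orbit). Since $\mu$ is a fixed filling multicurve, $i(\mu,\mu)$ is a fixed finite number; write $C=2d_{\alpha_0} i(\mu,\mu)$. Then
\begin{equation*}
\{\alpha\in\Gamma\cdot\alpha_0 : i(I(\alpha),\mu)\le 2L-C\}\subseteq \{\alpha\in\Gamma\cdot\alpha_0 : i(\mu,\alpha)\le L\}\subseteq \{\alpha\in\Gamma\cdot\alpha_0 : i(I(\alpha),\mu)\le 2L+C\}.
\end{equation*}
Applying $I$ and using that it is exactly $k(\alpha_0)$-to-$1$, the cardinality of $\{\alpha\in\Gamma\cdot\alpha_0 : i(\mu,\alpha)\le L\}$ is squeezed between $k(\alpha_0)\cdot\#\{\gamma\in\Gamma\cdot I(\alpha_0): i(\gamma,\mu)\le 2L-C\}$ and $k(\alpha_0)\cdot\#\{\gamma\in\Gamma\cdot I(\alpha_0): i(\gamma,\mu)\le 2L+C\}$.

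Now I would invoke the Erlandsson--Souto curve counting theorem: since $\mu$ is filling, the function $i(\mu,\cdot)$ is positive, continuous and homogeneous on $\CC(\Sigma)$, so $\#\{\gamma\in\Gamma\cdot I(\alpha_0): i(\gamma,\mu)\le T\}/T^{6g-6+2r}$ converges as $T\to\infty$ to a positive multiple of $\Fm_{Thu}^\Sigma(\{i(\mu,\cdot)\le 1\})$. Dividing the squeeze by $L^{6g-6+2r}$, the additive constant $C$ washes out in the limit (replacing $2L\pm C$ by $2L$ only changes things by a factor $(1\pm C/2L)^{6g-6+2r}\to 1$), and the factor $2^{6g-6+2r}$ from $T=2L$ combines with $k(\alpha_0)$ and the Erlandsson--Souto constant into the single positive constant $\Fc^\Gamma_{g,r}(\alpha_0)$, which by construction depends only on the type of $\alpha_0$, on $\Gamma$, and on the topology of $\Sigma$. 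The main obstacle I anticipate is purely bookkeeping: making sure the constant extracted is genuinely independent of the filling multicurve $\mu$ (so that it deserves the notation $\Fc^\Gamma_{g,r}(\alpha_0)$ rather than $\Fc^\Gamma_{g,r}(\alpha_0,\mu)$) — this requires knowing that the Erlandsson--Souto limiting constant for $\Gamma\cdot I(\alpha_0)$ relative to $i(\mu,\cdot)$ is itself $\mu$-independent after dividing by $\Fm_{Thu}^\Sigma(\{i(\mu,\cdot)\le1\})$, which is exactly the content of their theorem (the constant there is the "Mirzakhani constant" of the curve, not of the length function). One must also double-check that $i(\mu,\alpha)$, defined via the orthogeodesic representative of $\alpha$, agrees with the current-theoretic intersection number used in \cref{Lemme}; this is immediate since both count transverse intersection points of geodesic representatives.
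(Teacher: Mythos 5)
Your proposal is correct and follows essentially the same route as the paper: transfer the count through the $k(\alpha_0)$-to-$1$ map $I$ using \cref{kalpha0}, control the discrepancy between $2i(\alpha,\mu)$ and $i(I(\alpha),\mu)$ by the constant $2d_{\alpha_0}i(\mu,\mu)$ from \cref{Lemme}, and apply the Erlandsson--Souto curve-counting theorem to $\Gamma\cdot I(\alpha_0)$, absorbing $k_{\alpha_0}\cdot 2^{6g-6+2r}$ into the constant. The paper writes the same squeeze as a $\limsup$/$\liminf$ estimate and likewise sets $\Fc^\Gamma_{g,r}(\alpha_0)=k_{\alpha_0}\cdot 2^{6g-6+2r}\cdot\Fc^\Gamma_{g,r}(I(\alpha_0))$, so your bookkeeping concerns are resolved exactly as you anticipated.
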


\begin{proof} The intersection number with a filling multicurve being a positive, homogenuous and continuous function on the geodesic currents of $\Sigma$, \cite[Theo. 9.1]{ES} or \cite[Ex. 9.1]{ES} ensure that there exists $\Fc^\Gamma_{g,r}(I(\alpha_0))>0$ such that 
	\begin{equation}\label{CasCourbe}
		\lim\limits_{L\to\infty}\dfrac{\#\{\gamma\in \Gamma\cdot I(\alpha_0)|i(\mu,\gamma)\leq L\}}{L^{6g-6+2r}}=\Fc^\Gamma_{g,r}(I(\alpha_0))\cdot\Fm_{Thu}^\Sigma(\{i(\mu,\cdot)\leq 1 \}).
	\end{equation} 
Hence, by \cref{Lemme} and \cref{CasCourbe} we have 
\begin{align*}
	\limsup_L & \frac{\# \{ \alpha\in\Gamma\cdot\alpha_0|i(\alpha,\mu) \leq L\}}{L^{6g-6+2r}} \\
	&\leq  k_{\alpha_0}\cdot\limsup_L\dfrac{\#\{\gamma\in\Gamma\cdot I(\alpha_0)| i(\mu,\gamma)\leq 2L+2d_{\alpha_0}i(\mu,\mu) \}}{L^{6g-6+2r}} \\
	& = k_{\alpha_0}\cdot2^{6g-6+2r}\cdot\limsup_L \dfrac{\#\{\gamma\in\Gamma\cdot I(\alpha_0)| i(\mu,\gamma)\leq 2L+2d_{\alpha_0}i(\mu,\mu) \}}{(2L+2d_{\alpha_0}i(\mu,\mu))^{6g-6+2r}}(1+\dfrac{d_{\alpha_0}i(\mu,\mu)}{L})^{6g-6+2r}\\
	&= k_{\alpha_0}\cdot2^{6g-6+2r}\cdot\Fc^\Gamma_{g,r}(I(\alpha_0))\cdot\Fm_{Thu}^\Sigma(\{i(\mu,\cdot)\leq 1 \}),
\end{align*}
where $k_{\alpha_0}$ comes from \cref{kalpha0}. With the same computations 
\begin{align*}
	\liminf_L & \frac{\# \{ \alpha\in\Gamma\cdot\alpha_0|i(\alpha,\mu) \leq L\}}{L^{6g-6+2r}} \geq k_{\alpha_0}\cdot2^{6g-6+2r}\cdot\Fc^\Gamma_{g,r}(I(\alpha_0))\cdot\Fm_{Thu}^\Sigma(\{i(\mu,\cdot)\leq 1 \}),
\end{align*}
and we obtain \cref{mainTheo1} with $		\Fc^\Gamma_{g,r}(\alpha_0)=k_{\alpha_0}\cdot2^{6g-6+2r}\cdot\Fc^\Gamma_{g,r}(I(\alpha_0))$.
\end{proof}

\section{Proof of the main theorem}

In this section, $\Sigma$ is still a compact connected oriented surface, with genus $g$, and $r>0$ boundary components, with negative Euler characteristic and such that $(g,r)\neq(3,0)$. For technical reasons, $\Sigma$ is endowed with a hyperbolic structure with geodesic boundary. Note that it induces a hyperbolic structure on $D\Sigma$. In the following, we fix a weighted multiarc $\alpha_0\in\CA_m(\Sigma)$ and a finite index subgroup $\Gamma$ of $\Map(\Sigma)$. The doubling process allows us to see $\alpha_0$ as a current (see \cref{hatArcs}) and to define a sequence $(\nu^\Gamma_{\alpha_0,L})_{L>0}$ of Radon measures on $\CC(D\Sigma)$ from $\alpha_0$ by

\begin{align}
    \nu^\Gamma_{\alpha_0,L}=\dfrac{1}{L^{6g-6+2r}}\sum\limits_{\alpha\in \Gamma\cdot\alpha_0}\delta_{\frac{1}{L}\widehat{\alpha}} \quad \forall L>0.
\end{align}

The strategy to prove \cref{Main} is the following. We will prove that $(\nu_{\alpha_0,L}^\Gamma)_{L>0}$ has accumulation points and that they are all supported by $\widehat{\CM\CL}(\Sigma)$. Afterwards, we will use the characterisation of $\Map(\Sigma)$-invariant measures on measured geodesic laminations given by Lindenstrauss-Mirzakhani to show that these accumulation points are all multiples of the pushforward by the hat operator of the Thurston measure on $\Sigma$. We will conclude proving that they are all the same multiple of $\widehat{\Fm}_{Thu}^\Sigma$. Note that at each step \cref{mainTheo1} will play a key role. 
\begin{propo} \label{propo6.2} The set $(\nu_{\alpha_0,L}^\Gamma)_{L> 0}$ is precompact, meaning that
	for every $(L_n)_{n\in\BN}\in\BR_+^\BN$ with $L_n \to \infty$, there is a Radon measure $\widehat{\Fm}$ on $\CC(D\Sigma)$ with $$\lim\limits_{i\to\infty}\nu_{\alpha_0,L_{n_i}}^\Gamma=\widehat{\Fm}$$
	for some subsequence $L_{n_i}$,
\end{propo}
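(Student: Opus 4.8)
The plan is to establish precompactness of $(\nu^\Gamma_{\alpha_0,L})_{L>0}$ in the weak* topology on Radon measures on $\CC(D\Sigma)$. Since $\CC(D\Sigma)$ is a locally compact, second countable Hausdorff space (as $D\Sigma$ is closed), the space of Radon measures on it with the weak* topology is itself second countable, and by the Banach--Alaoglu-type argument for Radon measures it suffices to show that the family $(\nu^\Gamma_{\alpha_0,L})_{L>0}$ is \emph{uniformly bounded on compact sets}: for every compact $\CK\subset\CC(D\Sigma)$ there is a constant $C_\CK$ with $\nu^\Gamma_{\alpha_0,L}(\CK)\le C_\CK$ for all $L\ge 1$ (say). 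Given such uniform bounds, a diagonal extraction over an exhaustion of $\CC(D\Sigma)$ by compact sets produces, for any sequence $L_n\to\infty$, a subsequence along which $\nu^\Gamma_{\alpha_0,L_n}$ converges to a Radon measure $\widehat{\Fm}$; this is the standard ``every bounded sequence of Radon measures on a locally compact second countable space has a weak* convergent subsequence'' fact, which I would cite (it appears in this form in \cite{ES}).

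The real work is therefore the uniform bound on compact sets. First I would recall that a subset $\CK\subset\CC(D\Sigma)$ is relatively compact if and only if it is ``bounded'' in the sense that there is a filling curve (or current) $\eta$ on $D\Sigma$ and a constant $R$ with $i(\cdot,\eta)\le R$ on $\CK$; equivalently $\sup_{c\in\CK} i(c,\eta)<\infty$ for one, hence every, filling current $\eta$. So it suffices to bound, for each filling curve $\eta$ on $D\Sigma$ and each $R>0$,
\begin{equation*}
\nu^\Gamma_{\alpha_0,L}\bigl(\{c\in\CC(D\Sigma): i(c,\eta)\le R\}\bigr)=\frac{1}{L^{6g-6+2r}}\#\Bigl\{\alpha\in\Gamma\cdot\alpha_0:\ i\bigl(\tfrac1L\widehat\alpha,\eta\bigr)\le R\Bigr\}=\frac{1}{L^{6g-6+2r}}\#\bigl\{\alpha\in\Gamma\cdot\alpha_0:\ i(\widehat\alpha,\eta)\le RL\bigr\}
\end{equation*}
by a constant independent of $L\ge 1$. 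Now I would replace the arbitrary filling current $\eta$ on $D\Sigma$ by a \emph{symmetric} filling multicurve: averaging $\eta$ with its image $\sigma\cdot\eta$ under the involution only increases intersection numbers with symmetric curves $\widehat\alpha$ up to a bounded factor (or one simply picks a symmetric filling multicurve $\widehat\mu$ from the start, with $\mu$ a filling multicurve of $\Sigma$, which one can arrange since $\Sigma$ is not a pair of pants). For such $\widehat\mu$ one has $i(\widehat\alpha,\widehat\mu)=2\,i(i^+(\alpha),i^+(\mu))+2\,i(i^+(\alpha),i^-(\mu))$, and since the two copies meet only along $\partial\Sigma$ while $\alpha$ is an orthogeodesic transverse to the boundary and $\mu$ is disjoint from $\partial\Sigma$, this is comparable to $i(\alpha,\mu)$ on $\Sigma$ up to universal constants. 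Hence the count above is bounded by $L^{-(6g-6+2r)}\#\{\alpha\in\Gamma\cdot\alpha_0: i(\alpha,\mu)\le C'RL\}$ for a suitable filling multicurve $\mu$ on $\Sigma$ and constant $C'$, and this is exactly the quantity controlled by \cref{mainTheo1}: it converges as $L\to\infty$ to $(C'R)^{6g-6+2r}\cdot\Fc^\Gamma_{g,r}(\alpha_0)\cdot\Fm^\Sigma_{Thu}(\{i(\mu,\cdot)\le 1\})$, hence is bounded uniformly in $L\ge 1$. This gives the desired compact-set bound $C_\CK$.

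The main obstacle, and the place requiring care, is the comparison between intersection numbers on $D\Sigma$ and on $\Sigma$: one must check that $i(\widehat\alpha,\widehat\mu)$ is genuinely comparable (two-sided, with multiplicative and additive constants depending only on $\mu$ and the topology, not on $\alpha$) to $i(\alpha,\mu)$, keeping track of the contribution of intersections that occur near the boundary where the two copies $\Sigma^+$ and $\Sigma^-$ are glued. One clean way is to observe that the geodesic representative of $\widehat\alpha$ in the hyperbolic structure on $D\Sigma$ restricts on each half to (a curve freely homotopic to the arc, essentially) the orthogeodesic $\alpha$, so $i(\widehat\alpha,\widehat\mu)$ decomposes as twice the number of transverse intersection points of $\alpha$ with $\mu$ inside $\Sigma$, plus boundary terms that vanish because $\mu$ misses $\partial\Sigma$; this yields $i(\widehat\alpha,\widehat\mu)=2\,i(\alpha,\mu)$ exactly, or at worst up to a bounded additive error. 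Once that comparison is in hand, everything else is the routine weak* compactness machinery together with a direct appeal to \cref{mainTheo1}, and the diagonal argument over a compact exhaustion produces the limit measure $\widehat{\Fm}$ claimed in \cref{propo6.2}.
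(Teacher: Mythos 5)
Your argument is correct and follows essentially the same route as the paper: both reduce precompactness (via the standard weak* compactness criterion for Radon measures on the locally compact space $\CC(D\Sigma)$) to a uniform bound of $\nu^\Gamma_{\alpha_0,L}$ on sets of the form $\{i(\cdot,\widehat{\delta_0})\le R\}$ with $\delta_0$ a filling multicurve of $\Sigma$, use the identity $i(\widehat\alpha,\widehat{\delta_0})=2\,i(\alpha,\delta_0)$ to transfer the count back to $\Sigma$, and invoke \cref{mainTheo1} to see the resulting quantity converges and is hence bounded. The only cosmetic difference is that the paper phrases boundedness through $\limsup_n\int f\,d\nu^\Gamma_{\alpha_0,L_n}<\infty$ for compactly supported continuous $f$ rather than through a compact exhaustion and diagonal extraction.
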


\begin{proof}
	The $\nu_{\alpha_0,L_n}^\Gamma$ are measures with support in $\CC(D\Sigma)$ which is locally compact \cite{Bon}. Hence the set of Radon measures on $\CC(D\Sigma)$ has the Heine-Borel property: to show that each sequence has a convergent subsequence it suffices to show that $\{\nu_{\alpha_0,L_n}^\Gamma \}$ is bounded, that is to show that for every continuous and compactly supported function $f$ on $\CC(D\Sigma)$, $\limsup_n\medint\int fd\nu_{\alpha_0,L_n}^\Gamma<\infty$.

	Fix a continuous and compactly supported function $f$ on $\CC(D\Sigma)$. As $f$ has compact support, $|f|$ is  bounded by some $b>0$ and there is some $D>0$ such that $i(\mu,\widehat{\delta_0})\leq 2D$ for every $\mu$ in the support of $f$, where $\delta_0$ is a fixed filling curve of $\Sigma$. Hence 
	\begin{align} \label{equ1}
		\int |f|d\nu_{\alpha_0,L_n}^\Gamma &\leq b\cdot \nu_{\alpha_0,L_n}^\Gamma (\supp(f)) \\
		& \leq b\cdot\nu_{\alpha_0,L_n}^\Gamma (\{\mu \in \CC(D\Sigma)|i(\mu,\widehat{\delta_0})\leq 2D \}).\notag
	\end{align}

	Moreover, we have 
	\begin{align*}
		\nu_{\alpha_0,L_n}^\Gamma (\{\mu \in \CC(D\Sigma)|i(\mu,\widehat{\delta_0})\leq 2D \} )= \frac{\#\{ \alpha\in \Gamma\cdot\alpha_0|i(\widehat{\alpha},\widehat{\delta_0})\leq 2DL_n\}}{L_n^{6g-6+2r}}  = \frac{\#\{ \alpha\in \Gamma\cdot\alpha_0|i(\alpha,\delta_0)\leq DL_n\}}{L_n^{6g-6+2r}}. 
	\end{align*}
	Hence, \cref{mainTheo1} ensures that
	\begin{align*}
\nu_{\alpha_0,L_n}^\Gamma (\{\mu \in \CC(D\Sigma)|i(\mu,\widehat{\delta_0})\leq 2D \}) \xrightarrow[n \to\infty]{ } D^{6g-6+2r} \cdot\Fc^\Gamma_{g,r}(I(\alpha_0))\cdot \Fm_{Thu}^\Sigma(i(\delta_0,\cdot)\leq1),
	\end{align*}
	 which together with \cref{equ1} ensures that $\limsup_n\medint\int fd\nu_{\alpha_0,L_n}^\Gamma<\infty$. That concludes the proof.
\end{proof}

We now want to show that every $\widehat{\Fm}$ as above is supported by $\widehat{\CM\CL}(\Sigma)$. In some sense, this justifies the notation $\widehat{\Fm}$.

\begin{propo} \label{lemma} The measure $\widehat{\Fm}$ in \cref{propo6.2} is supported by $\widehat{\CM\CL}(\Sigma)$.
\end{propo}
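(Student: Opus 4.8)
The strategy is to show that any accumulation point $\widehat{\Fm}$ must vanish on the complement of $\widehat{\CM\CL}(\Sigma)$ inside $\CC(D\Sigma)$. Using \cref{caracterisation}, the set $\widehat{\CM\CL}(\Sigma)$ is carved out of the symmetric currents by two conditions: vanishing self-intersection together with no boundary leaves, and $i(\cdot,\partial\Sigma)=0$. Since every $\widehat{\alpha}$ is symmetric, each $\nu^\Gamma_{\alpha_0,L}$ is supported on symmetric currents, and this property passes to the limit $\widehat{\Fm}$ (the symmetric currents form a closed subset). So it remains to see that $\widehat{\Fm}$ is supported on $\{\mu : i(\mu,\mu)=0\}$, from which the boundary conditions will follow automatically, or be handled separately.

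\textbf{Key steps.} First I would fix a filling multicurve $\delta_0$ of $\Sigma$ and, for $\varepsilon>0$ and $R>0$, consider the open set
\begin{equation*}
U_{\varepsilon,R}=\{\mu\in\CC(D\Sigma) : i(\mu,\mu)>\varepsilon,\ i(\mu,\widehat{\delta_0})<R\},
\end{equation*}
noting that $\CC(D\Sigma)\setminus\widehat{\CM\CL}(\Sigma)$ is covered (up to the symmetric/boundary conditions already dealt with) by countably many such sets as $\varepsilon\to 0$, $R\to\infty$. It suffices to show $\widehat{\Fm}(U_{\varepsilon,R})=0$ for all $\varepsilon,R$. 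By the portmanteau theorem for weak* convergence, $\widehat{\Fm}(U_{\varepsilon,R})\le\liminf_n\nu^\Gamma_{\alpha_0,L_n}(U_{\varepsilon,R})$ (more precisely one applies lower semicontinuity on open sets, or works with a compactly supported bump function between this open set and a slightly smaller one to stay within the Radon framework). Now $\nu^\Gamma_{\alpha_0,L_n}(U_{\varepsilon,R})$ counts, with the $L_n^{6g-6+2r}$ normalization, the arcs $\alpha\in\Gamma\cdot\alpha_0$ with $i(\tfrac1{L_n}\widehat\alpha,\tfrac1{L_n}\widehat\alpha)>\varepsilon$ and $i(\tfrac1{L_n}\widehat\alpha,\widehat{\delta_0})<R$; bilinearity of $i$ turns these into $i(\widehat\alpha,\widehat\alpha)>\varepsilon L_n^2$ and $i(\widehat\alpha,\widehat{\delta_0})<RL_n$. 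But $\widehat\alpha$ is the double of an embedded arc $\alpha$, so $i(\widehat\alpha,\widehat\alpha)=2\,i(\alpha,\alpha)$ and the self-intersection $i(\alpha,\alpha)$ of a single (multi)arc is a fixed number bounded by a constant $C(\alpha_0)$ depending only on the type of $\alpha_0$ (arcs in a $\Map$-orbit all have the same self-intersection number). Hence for $n$ large the first condition $i(\alpha,\alpha)>\tfrac{\varepsilon}{2}L_n^2$ is satisfied by \emph{no} arc in the orbit, so $\nu^\Gamma_{\alpha_0,L_n}(U_{\varepsilon,R})=0$ eventually, giving $\widehat{\Fm}(U_{\varepsilon,R})=0$.

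\textbf{Finishing.} What remains is to handle the locus where $i(\mu,\mu)=0$ but $\mu\notin\widehat{\CM\CL}(\Sigma)$: symmetric laminations with a boundary component of $\Sigma$ as a leaf, or with $i(\mu,\partial\Sigma)\ne 0$. The boundary-leaf case: if a sequence $\tfrac1{L_n}\widehat{\alpha}$ accumulates to such a current, one compares against $i(\cdot,\widehat{\delta_0})$ and $i(\cdot,\widehat{\partial\Sigma})$. I would argue that, since each $\alpha_0$ is required not to be homotopic into the boundary, $i(\widehat{\alpha},\widehat{\partial\Sigma})$ stays comparable to a bounded multiple of $i(\widehat{\alpha},\widehat{\delta_0})$ up to lower-order self-intersection corrections — essentially a repeat of the argument of \cref{Lemme} with $\mu=\partial\Sigma$ — so that the normalized mass concentrated near the boundary laminations is killed in the same way (it would force $i(\widehat\alpha,\widehat\alpha)$ to grow, contradiction). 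Alternatively, and more cleanly, one can cover the bad locus $\{\mu\ \text{symmetric}, i(\mu,\mu)=0\}\setminus\widehat{\CM\CL}(\Sigma)$ by open sets of the form $\{i(\mu,\widehat{\partial\Sigma})>\varepsilon,\ i(\mu,\widehat{\delta_0})<R\}$ and run the identical counting argument, using that $i(\widehat\alpha,\widehat{\partial\Sigma})=2\,i(\alpha,\partial\Sigma)$ is again a constant fixed by the type of $\alpha_0$ (an orthogeodesic arc meets the boundary only in its two endpoints, contributing a bounded intersection number, or zero depending on conventions). Either way $\widehat{\Fm}$ gives no mass to the bad locus.

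\textbf{Main obstacle.} The crux is the bookkeeping of which currents are excluded and by which bounded quantity: the self-intersection number $i(\widehat\alpha,\widehat\alpha)$ and the boundary intersection $i(\widehat\alpha,\widehat{\partial\Sigma})$ must be shown to be bounded uniformly over the orbit $\Gamma\cdot\alpha_0$ (invariance under $\Map(\Sigma)$, hence dependence only on the type of $\alpha_0$), and one must check that these two conditions, together with symmetry, exactly cut out $\widehat{\CM\CL}(\Sigma)$ via \cref{caracterisation}. Once those boundedness facts are in place the rescaling $i(\tfrac1L\widehat\alpha,\tfrac1L\widehat\alpha)=\tfrac1{L^2}i(\widehat\alpha,\widehat\alpha)\to 0$ does all the work, and \cref{mainTheo1} is only needed to know that the relevant counts are genuinely finite (equivalently, that $\nu^\Gamma_{\alpha_0,L}$ are honest Radon measures), as already used in \cref{propo6.2}. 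The delicate point to state carefully is the passage from "open set $U$" to a genuine weak* estimate within the Radon (not finite-measure) setting, which is why I would phrase everything through compactly supported test functions supported in $\{i(\cdot,\widehat{\delta_0})<R\}$.
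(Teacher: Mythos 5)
Your handling of the two intersection-theoretic conditions is correct and is essentially the paper's argument in different packaging. The paper integrates $i(\mu,\mu)$ (and then $i(\mu,\partial\Sigma)$) against $\nu^\Gamma_{\alpha_0,L}$ over $\{i(\cdot,\Delta_0)\le R\}$ and uses \cref{mainTheo1} to see that the factor $i(\widehat\alpha,\widehat\alpha)/L^2=2i(\alpha_0,\alpha_0)/L^2$ kills the limit; you instead cover the bad locus by open sets $\{i(\mu,\mu)>\varepsilon\}\cap\{i(\mu,\widehat{\delta_0})<R\}$ and observe that, since $i(\widehat\alpha,\widehat\alpha)$ is constant on the orbit, these sets eventually contain no point of the support at all. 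Both work, your remark that \cref{mainTheo1} is then only needed for local finiteness is accurate, and the reduction to symmetric currents via closedness of the fixed locus of $\sigma$ is fine.

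The genuine gap is condition (1) of \cref{caracterisation}. A symmetric measured lamination can satisfy $i(\Lambda,\Lambda)=0$ and $i(\Lambda,\partial\Sigma)=0$ and still fail to lie in $\widehat{\CM\CL}(\Sigma)$ because it contains a component $b$ of $\partial\Sigma$ as a leaf (e.g.\ $\Lambda=c\cdot b$). Neither of your mechanisms excludes this locus: it is not covered by the open sets $\{i(\mu,\widehat{\partial\Sigma})>\varepsilon\}$, since the intersection number vanishes exactly there, and the claim that accumulation onto such a current ``would force $i(\widehat\alpha,\widehat\alpha)$ to grow'' is not an argument as stated --- in general a sequence of normalized curves can converge to a current charging a simple closed leaf without any growth of self-intersection (spiralling), so one must use something specific to doubled orthogeodesic arcs. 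The paper's mechanism is geometric rather than intersection-theoretic: every geodesic in the support of $\widehat\alpha$ meeting a lift $\tilde b$ of a boundary geodesic crosses it orthogonally, and geodesics orthogonal to $\tilde b$ avoid a neighbourhood of $\tilde b$ in the space of geodesics of $\widetilde{D\Sigma}$, so the weak* limit cannot have an atom on the $\pi_1$-orbit of $\tilde b$. (Your intuition can in fact be made to work --- concentration of normalized mass near $\tilde b$ forces the arcs to wrap around $b$ a number of times comparable to $L_n$, which does force unbounded self-intersection, contradicting constancy of $i(\alpha,\alpha)$ on the orbit --- but the step from ``mass near $\tilde b$'' to ``unbounded wrapping'' is precisely the missing geometric content.) Without some version of this transversality argument your proof does not rule out $\widehat{\Fm}$ charging the boundary laminations, which would invalidate the pushforward description in \cref{MesuresAssociees} and the subsequent application of Lindenstrauss--Mirzakhani.
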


\begin{proof}
	In light of \cref{caracterisation}, to show that $\widehat{\Fm}$ has support in $\widehat{\CM\CL}(\Sigma)$ there are three points to prove. We need to prove that $\widehat{\Fm}$ is supported by symmetric measured laminations --- the fact that the elements of the support are symmetric comes from the construction of the $\nu_{\alpha_0,L_n}^\Gamma$ so we just need to show that they are measured laminations. The second point to prove is that they do not cross the image of $\D\Sigma$ in $D\Sigma$. Finally, we need to argue that they do not have connected components of $\D\Sigma$ as leaves.

	Regarding this last point, note that if we assume that the support of $\widehat{\Fm}$ is made of symmetric measured laminations then the elements in the support of $\nu_{\alpha_0,L}^\Gamma$ are all orthogonal to $\D\Sigma$ so the one in the support of $\widehat{\Fm}$ are all transversal to $\D\Sigma$.

Let us now show the first two points. We know that $\widehat{\Fm}$ has support in the symmetric currents so, to show that it is supported by $\CM\CL^\sigma(D\Sigma)$ it suffices to show that for every $R>0$, 
	$$\medint\int_{\{\mu\in\CC(D\Sigma)|i(\mu,\Delta_0)\leq R\}}i(\mu,\mu)d\widehat{\Fm}=0$$
	 where $\Delta_0$ is a filling multicurve of $D\Sigma$. We can assume that $\Delta_0$ decomposes into $\widehat{\delta}_0$ --- where $\delta_0$ is a filling curve of $\Sigma$--- and a multicurve $s$ of $D\Sigma$ that completes $\widehat{\delta}_0$.
	
	For every $L>0$ we have 
	\begin{align}
	\medint\int_{\{\mu\in\CC(D\Sigma)|i(\mu,\Delta_0)\leq R\}}i(\mu,\mu)&d\nu_{\alpha_0,L}^\Gamma= \dfrac{1}{L^{6g-6+2r}}\sum\limits_{\substack{\alpha \in \Gamma\cdot\alpha_0 \\ i(\widehat{\alpha},\Delta_0)\leq LR } } i\left(\frac{\widehat{\alpha}}{L},\frac{\widehat{\alpha}}{L}\right) \label{3.2int} \\
	&\leq \dfrac{1}{L^{6g-6+2r}}\sum\limits_{\substack{\alpha \in \Gamma\cdot\alpha_0 \\ i(\widehat{\alpha},\widehat{\delta_0})\leq LR } } i\left(\frac{\widehat{\alpha}}{L},\frac{\widehat{\alpha}}{L}\right) \notag\\ 
	&=\dfrac{1}{L^{6g-6+2r}}\sum\limits_{\substack{\alpha \in \Gamma\cdot\alpha_0 \\ i(\alpha,\delta_0)\leq LR/2 }  } 2\frac{i(\alpha,\alpha)}{L^2} \notag \\
	&=\left(\frac{R}{2}\right)^{6g-6+2r} \dfrac{2i(\alpha_0,\alpha_0)}{L^2}\dfrac{\#\{\alpha\in\Gamma\cdot\alpha_0|i(\alpha,\delta_0)\leq \frac{LR}{2}\}}{(\frac{LR}{2})^{6g-6+2r}} \notag  \\
	&\xrightarrow[L \to \infty]{ } \left(\frac{R}{2}\right)^{6g-6+2r}\cdot 0\cdot \Fc^\Gamma_{g,r}(\alpha_0)\cdot\Fm_{Thu}^\Sigma(\{i(\cdot,\delta_0)\leq 1\}) \text{ Thm 2.3. }\notag
	\end{align}
	However, there is some $L_n \xrightarrow[n\to\infty]{ } \infty$ such that $\Fm_{\alpha_0,L_n}^\Sigma \xrightarrow[n\to\infty]{} \widehat{\Fm}$. Hence, \cref{3.2int} ensures that  
\begin{equation*}
	\medint\int_{\{\mu\in\CC(D\Sigma)|i(\mu,\Delta_0)\leq R\}}i(\mu,\mu)d\widehat{\Fm} =0,
\end{equation*}
 meaning that the elements of the support of $\widehat{\Fm}$ have no self intersection: they are measured laminations of $D\Sigma$ (see \cref{MLinC}).

The same computations allow us to obtain that for every $R>0$,  $$\medint\int_{\{\mu\in\CC(D\Sigma)|i(\mu,\Delta_0)\leq R\}}i(\mu,\D\Sigma)d\widehat{\Fm}=0.$$ 
Hence the measured laminations in the support of $\widehat{\Fm}$ do not cross the boundary of $\Sigma$ and that concludes the proof.
\end{proof}

\begin{cor} \label{MesuresAssociees}
	Any measure $\widehat{\Fm}$ as in \cref*{propo6.2} arises as the pushforward by the hat operator of a measure $\Fm$ in $\CM\CL(\Sigma)$ defined by
	\begin{equation}\forall U \subset\CM\CL(\Sigma),\quad\Fm(U):=\widehat{\Fm}(\{\widehat{\lambda}|\lambda \in U \}).
	\end{equation}
	\hspace*{16.3cm}\qedsymbol
\end{cor}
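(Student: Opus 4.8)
The plan is to read the statement as a change-of-variables formula. By \cref{lemma} the measure $\widehat{\Fm}$ concentrates all of its mass on the set $\widehat{\CM\CL}(\Sigma)$, which is the image of $\CM\CL(\Sigma)$ under the hat operator; since the hat operator is an embedding, one can transport $\widehat{\Fm}$ back to $\CM\CL(\Sigma)$ along the inverse map, and I claim that the resulting measure is exactly the $\Fm$ written in the statement.

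First I would recall that $\widehat{\cdot}\colon\CC(\Sigma)\hookrightarrow\CC(D\Sigma)$ is, as set up in Section 2, an injective topological embedding, so its restriction
\[
h:=\widehat{\cdot}\big|_{\CM\CL(\Sigma)}\colon\ \CM\CL(\Sigma)\ \xrightarrow{\sim}\ \widehat{\CM\CL}(\Sigma)
\]
is a homeomorphism onto its image. As $\CM\CL(\Sigma)$ is a Polish space (homeomorphic, by Thurston's parametrisation, to a Euclidean space) and $\CC(D\Sigma)$ is metrizable, the image $\widehat{\CM\CL}(\Sigma)$ is $G_\delta$, in particular Borel, in $\CC(D\Sigma)$, and $h^{-1}$ is continuous, hence Borel measurable. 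Alternatively, by \cref{caracterisation}, $\widehat{\CM\CL}(\Sigma)$ is obtained from the closed set of $\sigma$-invariant measured laminations $\mu$ with $i(\mu,\partial\Sigma)=0$ by removing an $F_\sigma$ set, namely those having a component of $\partial\Sigma$ as a leaf of positive weight (for each such component $\delta_i$ and each $w>0$ the set $\{\mu\mid\mu-w\delta_i\in\CC(D\Sigma)\}$ is closed), so it is Borel directly.

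Then I would set $\Fm:=(h^{-1})_{\ast}\widehat{\Fm}$, the pushforward of $\widehat{\Fm}$ through $h^{-1}$. Unwinding the definition, this is precisely the set function $U\mapsto\widehat{\Fm}(\{\widehat{\lambda}\mid\lambda\in U\})$ of the statement, hence a well-defined Borel measure on $\CM\CL(\Sigma)$. It is Radon: if $K\subset\CM\CL(\Sigma)$ is compact, then $h(K)$ is compact in $\CC(D\Sigma)$, so $\Fm(K)=\widehat{\Fm}(h(K))<\infty$ since $\widehat{\Fm}$ is Radon by \cref{propo6.2}. Extending $\Fm$ by zero off $\CM\CL(\Sigma)$ gives a Radon measure on $\CC(\Sigma)$ supported on $\CM\CL(\Sigma)$. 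It then remains to check that $\widehat{\Fm}$ is the pushforward of $\Fm$ by the hat operator: for every Borel $V\subset\CC(D\Sigma)$, using that $\widehat{\Fm}$ gives no mass outside $\widehat{\CM\CL}(\Sigma)$ by \cref{lemma} and that $V\cap\widehat{\CM\CL}(\Sigma)=h\bigl(h^{-1}(V)\bigr)$, one gets
\[
\widehat{\cdot}_{\ast}\Fm(V)=\Fm\bigl(h^{-1}(V)\bigr)=\widehat{\Fm}\Bigl(h\bigl(h^{-1}(V)\bigr)\Bigr)=\widehat{\Fm}\bigl(V\cap\widehat{\CM\CL}(\Sigma)\bigr)=\widehat{\Fm}(V),
\]
so $\widehat{\Fm}=\widehat{\cdot}_{\ast}\Fm$.

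The last display is purely formal. I expect the only genuinely delicate point to be the measurability bookkeeping of the second paragraph --- that $\widehat{\CM\CL}(\Sigma)$ is a Borel subset of $\CC(D\Sigma)$ and that the inverse hat operator is measurable, so that the pushforward is literally well defined; once this is granted, the corollary is an immediate consequence of \cref{lemma} together with the fact that doubling embeds $\CC(\Sigma)$ in $\CC(D\Sigma)$.
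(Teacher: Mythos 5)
Your proposal is correct and follows the same route the paper takes: the corollary is recorded there without proof precisely because, once \cref{lemma} guarantees that $\widehat{\Fm}$ is supported on $\widehat{\CM\CL}(\Sigma)$ and one notes that the hat operator embeds $\CM\CL(\Sigma)$ homeomorphically onto that set, transporting the measure back is immediate. Your extra paragraphs simply make explicit the routine measurability bookkeeping (Borelness of the image and measurability of the inverse) that the paper leaves implicit.
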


In the line of 
\cite{RS}, to show that the limit element is a multiple of the pulled-back Thurston measure we will use the following theorem which comes from \cite{LM}.

\begin{named}{Theorem}[Lindenstrauss-Mirzakhani]
  Let $\mu$ be a locally finite $\Map(\Sigma)$-invariant
	measure on $\CM\CL(\Sigma)$. If for all simple closed curve $\gamma$ of $\Sigma$
\begin{align} \label{mesureNulle}
	 \mu ( \{ \lambda \in \CM\CL(\Sigma) | i(\lambda,\gamma)=0 \} )=0,
\end{align}
	 then $\mu$ is a multiple of the Thurston measure $\Fm_{Thu}^\Sigma$.
\end{named}

It is certainly known to experts that this theorem is also true for a $\Gamma$-invariant measure where $\Gamma$ is a finite index subgroup of $\Map(\Sigma)$. Still, let us give a proof.

\begin{lem} \label{generalisationLM}
	Let $\Gamma$ be a finite index subgroup of $\Map(\Sigma)$.  If $\mu$ is a locally finite $\Gamma$-invariant
	measure on $\CM\CL(\Sigma)$ such that
\begin{align} 
	 \mu ( \{ \lambda \in \CM\CL(\Sigma) | i(\lambda,\gamma)=0 \} )=0
\end{align}
	for every simple closed curve $\gamma$ of $\Sigma$, then $\mu$ is a multiple of the Thurston measure $\Fm_{Thu}^\Sigma$.
\end{lem}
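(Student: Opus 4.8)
The plan is to deduce the $\Gamma$-invariant case from the $\Map(\Sigma)$-invariant case of Lindenstrauss--Mirzakhani by an averaging argument. Given a locally finite $\Gamma$-invariant measure $\mu$ on $\CM\CL(\Sigma)$ satisfying the vanishing hypothesis, I would pick coset representatives $\phi_1,\dots,\phi_k$ for $\Gamma$ in $\Map(\Sigma)$ (finite since $[\Map(\Sigma):\Gamma]<\infty$; note $\Gamma$ need not be normal, so one works with left cosets $\phi_i\Gamma$) and form the symmetrized measure $\bar\mu := \sum_{i=1}^{k}(\phi_i)_*\mu$.

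\emph{Step 1: $\bar\mu$ is well defined and $\Map(\Sigma)$-invariant.} First I would check that $\bar\mu$ does not depend on the choice of coset representatives: replacing $\phi_i$ by $\phi_i\gamma$ with $\gamma\in\Gamma$ changes $(\phi_i)_*\mu$ to $(\phi_i)_*(\gamma)_*\mu = (\phi_i)_*\mu$ by $\Gamma$-invariance of $\mu$. Then, for any $\psi\in\Map(\Sigma)$, left multiplication by $\psi$ permutes the left cosets $\{\phi_i\Gamma\}$, so $\psi\phi_i = \phi_{\sigma(i)}\gamma_i$ for a permutation $\sigma$ and elements $\gamma_i\in\Gamma$; hence $\psi_*\bar\mu = \sum_i (\psi\phi_i)_*\mu = \sum_i (\phi_{\sigma(i)})_*(\gamma_i)_*\mu = \sum_i (\phi_{\sigma(i)})_*\mu = \bar\mu$. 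Also $\bar\mu$ is locally finite, being a finite sum of pushforwards of a locally finite measure under homeomorphisms of $\CM\CL(\Sigma)$.

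\emph{Step 2: $\bar\mu$ satisfies the Lindenstrauss--Mirzakhani hypothesis.} For a simple closed curve $\gamma$, the set $\{\lambda : i(\lambda,\gamma)=0\}$ pulled back under $\phi_i$ is $\{\lambda : i(\lambda,\phi_i^{-1}\gamma)=0\}$, and $\phi_i^{-1}\gamma$ is again a simple closed curve; since $\mu$ gives zero mass to that set by hypothesis, each $(\phi_i)_*\mu$ gives zero mass to $\{\lambda : i(\lambda,\gamma)=0\}$, hence so does $\bar\mu$. By the Lindenstrauss--Mirzakhani theorem quoted above, $\bar\mu = c\cdot\Fm_{Thu}^\Sigma$ for some $c\ge 0$.

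\emph{Step 3: recover $\mu$.} Now I would use that $\mu$ is itself $\Map(\Sigma)$-invariant up to the averaging: more precisely, since $\Fm_{Thu}^\Sigma$ is $\Map(\Sigma)$-invariant, $\bar\mu = c\,\Fm_{Thu}^\Sigma$ gives $\sum_i (\phi_i)_*\mu = c\,\Fm_{Thu}^\Sigma$. To isolate $\mu$, observe that $\mu$ is $\Gamma$-invariant and $\Gamma$ has finite index, so one can instead apply the theorem to $\Map(\Sigma)$ directly on a related measure, or argue as follows: consider the normal core $\Gamma_0 = \bigcap_{i} \phi_i\Gamma\phi_i^{-1}$, a finite-index \emph{normal} subgroup of $\Map(\Sigma)$ contained in $\Gamma$; then $\mu$ is $\Gamma_0$-invariant, and averaging over $\Map(\Sigma)/\Gamma_0$ produces a $\Map(\Sigma)$-invariant measure which is a multiple of $\Fm_{Thu}^\Sigma$. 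Since each pushforward $(\psi)_*\mu$ with $\psi\in\Map(\Sigma)$ is again a locally finite $\Gamma_0$-invariant measure satisfying the vanishing hypothesis (same computation as Step 2, as $\psi^{-1}\gamma$ is simple), if one knew a priori that the space of such measures were one-dimensional we would be done — but that is exactly what we are proving. The cleanest route is therefore: the vanishing hypothesis plus local finiteness are preserved under the finite averaging over $\Map(\Sigma)/\Gamma_0$, giving $\sum_{\psi\in\Map(\Sigma)/\Gamma_0}\psi_*\mu = c\,\Fm_{Thu}^\Sigma$; and since $\mu$ is $\Gamma$-invariant while $\Gamma/\Gamma_0$ acts on the finite coset set, group the sum as $[\Gamma:\Gamma_0]\sum_{i}(\phi_i)_*\mu$ where now $\phi_i$ range over $\Map(\Sigma)/\Gamma$, reducing to $\sum_i(\phi_i)_*\mu = c'\,\Fm_{Thu}^\Sigma$.

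\emph{The main obstacle}, and the place where the argument needs care, is precisely this last extraction: showing that $\sum_i (\phi_i)_*\mu = c'\,\Fm_{Thu}^\Sigma$ forces $\mu$ itself to be a multiple of $\Fm_{Thu}^\Sigma$, rather than merely a $\Gamma$-invariant measure whose $\Map(\Sigma)$-average happens to be Thurston. The resolution is that each individual pushforward $(\phi_i)_*\mu$ is also a locally finite $\Gamma_0$-invariant (in fact one can arrange $\Gamma$-invariant by choosing $\phi_i$ carefully, using normality of $\Gamma_0$) measure satisfying the vanishing hypothesis, so it too is a multiple of $\Fm_{Thu}^\Sigma$ — but this is circular unless we already have the lemma. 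The honest fix is to run Lindenstrauss--Mirzakhani's proof $\Gamma$-equivariantly: their argument is local on $\CM\CL(\Sigma)$ and uses the $\Map(\Sigma)$-action only through a specific simple closed curve $\gamma$ whose stabilizer in $\Map(\Sigma)$ — equivalently, whose Dehn twist $T_\gamma$ — already lies in a finite power inside $\Gamma$ (indeed $T_\gamma^N\in\Gamma$ for some $N$ since $\Gamma$ has finite index), and invariance under $T_\gamma^N$ together with ergodicity-type input is enough to run the train-track coordinate argument. So I would either (a) invoke that the Lindenstrauss--Mirzakhani proof only ever uses finitely many mapping classes each of which has a power in $\Gamma$, or (b) use the averaging of Steps 1--2 to get $\bar\mu = c\,\Fm_{Thu}$, then additionally exploit that $\mu\le\bar\mu$ is absolutely continuous with a $\Gamma$-invariant density with respect to the $\Map(\Sigma)$-ergodic measure $\Fm_{Thu}^\Sigma$, and conclude that a $\Gamma$-invariant $L^1_{loc}$ density against an ergodic measure for a finite-index subgroup is a.e.\ constant (since $\Gamma$ is still ergodic for $\Fm_{Thu}^\Sigma$ by a standard finite-index ergodicity argument). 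Route (b) is the one I would write up: $\mu \ll \Fm_{Thu}^\Sigma$ with density $h\ge 0$, $h$ is $\Gamma$-invariant, $\Gamma$ acts ergodically on $(\CM\CL(\Sigma),\Fm_{Thu}^\Sigma)$ because $[\Map(\Sigma):\Gamma]<\infty$ and $\Map(\Sigma)$ acts ergodically, hence $h$ is constant $\Fm_{Thu}^\Sigma$-a.e., i.e.\ $\mu$ is a multiple of $\Fm_{Thu}^\Sigma$.
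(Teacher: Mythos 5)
Your route (b) is essentially the paper's own argument: average $\mu$ over coset representatives to obtain a $\Map(\Sigma)$-invariant, locally finite measure $\bar\mu$ still satisfying the vanishing hypothesis, apply Lindenstrauss--Mirzakhani to get $\bar\mu=c\,\Fm_{Thu}^\Sigma$, note that taking $\phi_1=\Id$ gives $\mu\le\bar\mu$, hence $\mu\ll\Fm_{Thu}^\Sigma$ with a $\Gamma$-invariant density, and conclude by ergodicity. (The paper first replaces $\Gamma$ by a finite-index \emph{normal} subgroup so that the average is visibly well defined; your left-coset bookkeeping in Steps 1--2 achieves the same without that reduction, which is fine. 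The long detour through route (a) and the worry about circularity are unnecessary once you commit to (b).)

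The one genuine flaw is the justification of the final step. Ergodicity does \emph{not} pass to finite-index subgroups by any ``standard argument'': a $G$-ergodic measure can split into as many as $[G:H]$ ergodic components for $H\le G$ (already $\BZ/2$ acting on two atoms is ergodic while its trivial subgroup is not; more structurally, the algebra of $H$-invariant sets carries an action of the finite group $G/H$ whose fixed points are the $G$-invariant sets, and triviality of the fixed-point algebra does not force triviality of the whole algebra). What you actually need is the theorem that the Thurston measure is ergodic under \emph{every} finite-index subgroup of $\Map(\Sigma)$; this is a known result, which the paper invokes with a citation to Masur, not a formal consequence of ergodicity of the full mapping class group. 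With that input your density argument closes exactly as in the paper: $h=d\mu/d\Fm_{Thu}^\Sigma$ is $\Gamma$-invariant and $\Fm_{Thu}^\Sigma$ is $\Gamma$-ergodic, hence $h$ is constant almost everywhere.
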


\begin{proof}
	Since every finite index subgroup $\Gamma$ of $\Map(\Sigma)$ admits as a subgroup a finite index normal subgroup of $\Map(\Sigma)$, we can suppose that $\Gamma$ is normal to begin with.

	As the subgroup $\Gamma$ is finite index, we can choose finitely many elements $\phi_1,...,\phi_s$ of $\Map(\Sigma)$ such that every element $\varphi\in\Map(\Sigma)$ can be uniquely written as $\varphi=g\circ\phi_i$ with $g\in\Gamma$. If $\mu$ is a measure as in the statement then we define 
	\begin{equation*}
		\tilde{\mu} := \sum\limits_{i=1}^s \phi_{i*}\mu.
	\end{equation*}
	
	Since $\Gamma$ is normal and $\mu$ is $\Gamma$-invariant, the definition is independant of the choice of the $\phi_i$.

	Now, for $\psi\in\Map(\Sigma)$, $\left[\phi\right]\in\Map(\Sigma)/\Gamma \mapsto \left[\psi\circ\phi\right]\in \Map(\Sigma)/\Gamma$ is well-defined and bijective so
	$\psi_*\tilde{\mu}=\sum\limits_{i=1}^s \psi_*\phi_{i*}\mu
		= \sum\limits_{i=1}^s (\psi\circ\phi_i)_*\mu 
		= \tilde{\mu}$ and $\tilde{\mu}$ is a $\Map(\Sigma)$-invariant locally finite measure on $\CM\CL(\Sigma)$.

	Moreover, if $\gamma$ is a simple curve in $\Sigma$ then \begin{align*}
		\tilde{\mu}(\{\lambda\in\CM\CL(\Sigma)|i(\lambda,\gamma)=0 \})&= \sum\limits_{i=1}^s \phi_{i*}\mu(\{\lambda\in\CM\CL(\Sigma)|i(\lambda,\gamma)= 0\}) \\
		&= \sum\limits_{i=1}^s \mu(\{\lambda\in\CM\CL(\Sigma)|i(\phi_i^{-1}\lambda,\gamma)=0 \}) \\
		&= \sum\limits_{i=1}^s \mu(\{\lambda\in\CM\CL(\Sigma)|i(\lambda,\phi_i\gamma)=0 \}) .
	\end{align*}
	As a consequence, $\tilde{\mu}$ satisfies (\ref{mesureNulle}) as soon as $\mu$ does, and is therefore a multiple of the Thurston measure by Lindenstrauss-Mirzakhani Theorem.  

	Moreover, we can suppose that $\phi_1=Id_\Sigma$ hence $\mu=\phi_{1*}\mu$  is absolutely continuous with respect to $\tilde{\mu}$ and to $\Fm_{Thu}^\Sigma$. However, the Thurston measure is $\Gamma$-ergodic \cite{Masur} so $\mu$ is a positive multiple of $\Fm_{Thu}^\Sigma$.
\end{proof}

\begin{lem} \label{Lemma2}
	If $\widehat{\Fm}$ is as in \cref{propo6.2} then the associated measure $\Fm$ (see \cref{MesuresAssociees}) on $\CM\CL(\Sigma)$ satisfies \cref{mesureNulle}.
\end{lem}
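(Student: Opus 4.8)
The goal is to show that for every simple closed curve $\gamma$ of $\Sigma$ one has $\Fm(\{\lambda\in\CM\CL(\Sigma)\mid i(\lambda,\gamma)=0\})=0$. By definition of $\Fm$ in terms of $\widehat{\Fm}$ (see \cref{MesuresAssociees}), and since $i(\widehat\lambda,\widehat\gamma)=2i(\lambda,\gamma)$ under the hat operator, this is equivalent to showing $\widehat{\Fm}(\{\mu\in\widehat{\CM\CL}(\Sigma)\mid i(\mu,\widehat\gamma)=0\})=0$. The plan is to obtain this by a direct estimate on the approximating measures $\nu^\Gamma_{\alpha_0,L}$, exactly in the style of the computations already carried out in \cref{propo6.2} and \cref{lemma}: bound the $\widehat{\Fm}$-mass of the relevant set from above by a $\limsup$ of $\nu^\Gamma_{\alpha_0,L_n}$-masses of slightly enlarged sets, then evaluate the latter using \cref{mainTheo1}.

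First I would fix a simple closed curve $\gamma$ and a filling multicurve $\delta_0$ of $\Sigma$, and consider for each $R>0$ and $\epsilon>0$ the open set $U_{R,\epsilon}=\{\mu\in\CC(D\Sigma)\mid i(\mu,\widehat{\delta_0})<R,\ i(\mu,\widehat\gamma)<\epsilon\}$. Since intersection number is continuous on $\CC(D\Sigma)$, the portmanteau inequality for weak* convergence gives $\widehat{\Fm}(U_{R,\epsilon})\le\liminf_n\nu^\Gamma_{\alpha_0,L_n}(U_{R,\epsilon})$; actually I want an upper bound, so instead I would take a \emph{closed} enlargement and use that $\widehat{\Fm}(\{i(\mu,\widehat\gamma)=0,\ i(\mu,\widehat{\delta_0})\le R\})\le\limsup_n\nu^\Gamma_{\alpha_0,L_n}(\{i(\mu,\widehat\gamma)\le\epsilon,\ i(\mu,\widehat{\delta_0})\le R\})$ for every $\epsilon>0$, valid for the compact set on the left. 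Then unwinding the definition of $\nu^\Gamma_{\alpha_0,L_n}$,
\[
\nu^\Gamma_{\alpha_0,L_n}\bigl(\{i(\mu,\widehat\gamma)\le\epsilon,\ i(\mu,\widehat{\delta_0})\le R\}\bigr)=\frac{\#\{\alpha\in\Gamma\cdot\alpha_0\mid i(\alpha,\gamma)\le \tfrac{\epsilon L_n}{2},\ i(\alpha,\delta_0)\le \tfrac{RL_n}{2}\}}{L_n^{6g-6+2r}},
\]
using $i(\widehat\alpha,\widehat\beta)=2i(\alpha,\beta)$. The key point is that $\gamma$ is a \emph{simple closed curve} and hence not filling, so $i(\cdot,\gamma)$ alone does not control the count; but the combined condition ``$i(\alpha,\gamma)$ small and $i(\alpha,\delta_0)$ bounded'' should force a genuinely lower-order count.

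The decisive input is \cref{mainTheo1}: applied to the filling multicurve $\delta_0$ it gives $\#\{\alpha\in\Gamma\cdot\alpha_0\mid i(\alpha,\delta_0)\le L\}\sim\Fc^\Gamma_{g,r}(\alpha_0)\,\Fm_{Thu}^\Sigma(\{i(\delta_0,\cdot)\le1\})\,L^{6g-6+2r}$, so after dividing by $L_n^{6g-6+2r}$ and letting $n\to\infty$ the count above converges to $(R/2)^{6g-6+2r}$ times the $\Fm_{Thu}^\Sigma$-mass of $\{\lambda\in\CM\CL(\Sigma)\mid i(\lambda,\delta_0)\le1,\ i(\lambda,\gamma)\le\epsilon\}$ — here I would need the boundary of this region to be $\Fm_{Thu}^\Sigma$-null, which holds because $\Fm_{Thu}^\Sigma$ gives no mass to the real-analytic subvariety where $i(\cdot,\gamma)=\epsilon$. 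Letting $\epsilon\to0$, the mass of $\{i(\lambda,\delta_0)\le1,\ i(\lambda,\gamma)\le\epsilon\}$ decreases to the mass of $\{i(\lambda,\delta_0)\le1,\ i(\lambda,\gamma)=0\}$, and this last set is $\Fm_{Thu}^\Sigma$-null: the set $\{i(\cdot,\gamma)=0\}$ is a lower-dimensional piecewise-linear subset of $\CM\CL(\Sigma)$ (laminations disjoint from $\gamma$ live on the complementary subsurface of strictly smaller complexity), and the Thurston measure — being in the Lebesgue class of the piecewise-linear integral structure — assigns it zero. Finally, letting $R\to\infty$ exhausts $\CM\CL(\Sigma)$ by the sets $\{i(\cdot,\widehat{\delta_0})\le R\}$ (each $\widehat\lambda$ with $\lambda\in\CM\CL(\Sigma)$ has finite $i(\widehat\lambda,\widehat{\delta_0})$ since $\delta_0$ is filling), so $\widehat{\Fm}(\{i(\mu,\widehat\gamma)=0\}\cap\widehat{\CM\CL}(\Sigma))=0$, which by \cref{MesuresAssociees} is exactly $\Fm(\{i(\lambda,\gamma)=0\})=0$.

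The main obstacle I anticipate is the interchange of limits: one must be careful that taking $\epsilon\to0$ \emph{after} the weak* limit $n\to\infty$ is legitimate, i.e.\ that the enlargement of the target set by ``$i(\cdot,\widehat\gamma)\le\epsilon$'' rather than ``$=0$'' does not destroy the vanishing. This is handled by the monotone convergence of the $\Fm_{Thu}^\Sigma$-masses together with the fact that the limiting set genuinely has Thurston measure zero — so the only real content is the nullity statement $\Fm_{Thu}^\Sigma(\{i(\cdot,\gamma)=0\})=0$, which is classical, plus the routine but slightly delicate bookkeeping with the constant $2$ coming from the doubling operator and with the $\Fm_{Thu}^\Sigma$-nullity of the level set $\{i(\cdot,\gamma)=\epsilon\}$ needed to pass the weak* limit through.
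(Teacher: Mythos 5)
There is a genuine gap at the step you yourself call the ``decisive input''. From \cref{mainTheo1} you only know the asymptotics of the \emph{single-constraint} count $\#\{\alpha\in\Gamma\cdot\alpha_0\mid i(\alpha,\delta_0)\le L\}$. You then assert that the \emph{joint-constraint} count $\#\{\alpha\in\Gamma\cdot\alpha_0\mid i(\alpha,\gamma)\le\epsilon L_n/2,\ i(\alpha,\delta_0)\le RL_n/2\}$, normalised by $L_n^{6g-6+2r}$, converges to $(R/2)^{6g-6+2r}\,\Fc^\Gamma_{g,r}(\alpha_0)\,\Fm_{Thu}^\Sigma(\{i(\cdot,\delta_0)\le1,\ i(\cdot,\gamma)\le\epsilon\})$. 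That does not follow from \cref{mainTheo1}: identifying the limit of the masses of \emph{arbitrary} such sets with their Thurston measure is exactly the content of the weak* convergence $\nu^\Gamma_{\alpha_0,L}\to\Fc^\Gamma_{g,r}(\alpha_0)\widehat{\Fm}_{Thu}^\Sigma$, i.e.\ of \cref{Main} --- which is proved \emph{using} \cref{Lemma2}. The argument is therefore circular. Concretely, at this stage of the proof all that is known about the accumulation point $\widehat{\Fm}$ is that it is a locally finite $\Gamma$-invariant measure supported on $\widehat{\CM\CL}(\Sigma)$ whose masses of the sublevel sets of $i(\cdot,\widehat{\delta_0})$ agree with those of a multiple of $\widehat{\Fm}_{Thu}^\Sigma$; a priori $\widehat{\Fm}$ could still charge the lower-dimensional locus $\{i(\cdot,\widehat{\gamma})=0\}$ (e.g.\ have atoms on multicurves disjoint from $\gamma$) while keeping those single-constraint masses correct. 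Ruling this out is precisely the purpose of the lemma, so the classical fact $\Fm_{Thu}^\Sigma(\{i(\cdot,\gamma)=0\})=0$ cannot be invoked for $\Fm$ until after $\Fm$ has been identified with a multiple of $\Fm_{Thu}^\Sigma$.

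The paper closes this gap by routing the estimate through curves rather than arcs: using \cref{kalpha0} and \cref{Lemme}, the arc count with $i(\alpha,\gamma)<\epsilon L_n$ is bounded above by $k_{\alpha_0}$ times the count of curves $\tau\in\Gamma\cdot I(\alpha_0)$ with $i(\tau,\gamma)<2\epsilon L_n$, and for the curve orbit $\Gamma\cdot I(\alpha_0)$ the full weak* convergence of the counting measures $\nu^\Gamma_{I(\alpha_0),L}$ to a multiple of $\Fm_{Thu}^\Sigma$ is already available from Erlandsson--Souto. This yields $\limsup_n\nu^\Gamma_{\alpha_0,L_n}(\{i(\cdot,\widehat{\gamma})\le\epsilon,\ i(\cdot,\widehat{\delta_0})\le R\})\le k_{\alpha_0}\Fc^\Gamma_{g,r}(I(\alpha_0))\,\Fm_{Thu}^\Sigma(\{i(\cdot,\gamma)\le2\epsilon\})$, where now the Thurston measure legitimately appears because it is the known limit of the \emph{curve} measures, and the right-hand side tends to $0$ as $\epsilon\to0$. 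Your framing (inner regularity in $R$, Portmanteau, the $\epsilon\to0$ limit, the factor $2$ from doubling) is otherwise fine; the missing ingredient is this transfer to the curve orbit, without which the key limit evaluation is unjustified. A secondary point: the set $\{i(\cdot,\widehat{\delta_0})\le R\}$ is not compact in $\CC(D\Sigma)$ since $\widehat{\delta_0}$ is not filling in $D\Sigma$, so the closed-set $\limsup$ form of Portmanteau you invoke needs justification; the paper avoids this by using the open-set $\liminf$ inequality instead.
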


\begin{proof}
	Let $\gamma$ be a simple curve of $\Sigma$, we want to show that $\Fm(\{\lambda\in\CM\CL(\Sigma)|i(\lambda,\gamma)=0 \})=0$. By inner regularity it suffices to show that for every $R>0$, 
	\begin{align} \Fm(\{\lambda\in\CM\CL(\Sigma)| i(\lambda,\gamma)<\epsilon,  i(\lambda,\delta_0)<R \})\xrightarrow[\epsilon\to0]{ } 0 \label{CVG}
	\end{align}
	where $\delta_0$ is a filling curve of $\Sigma$.
	
	Moreover, if $L_n$ is such that $\nu_{\alpha_0,L_n}^\Gamma\xrightarrow[n\to\infty]{ } \widehat{\Fm}$ then the Portmanteau Theorem ensures that
	\begin{align*}
		\Fm(\{\lambda\in\CM\CL(\Sigma)| i(\lambda,\gamma)<\epsilon,  i(\lambda,\delta_0)<R \}) &=\widehat{\Fm}( \{\widehat{\lambda}\in\widehat{\CM\CL}(\Sigma)| i(\lambda,\gamma)<\epsilon,  i(\lambda,\delta_0)<R \}) \\
		&\leq \liminf_n  \nu_{\alpha_0,L_n}^\Gamma ( \{\widehat{\lambda}\in\widehat{\CM\CL}(\Sigma)| i(\lambda,\gamma)<\epsilon,  i(\lambda,\delta_0)<R \}).
	\end{align*}

	Recall that by \cref{kalpha0} $I_{|\Gamma\cdot \alpha_0} : \alpha \in \Gamma\cdot\alpha_0 \mapsto I(\alpha) \in \Gamma\cdot I(\alpha_0)$ is $k_{\alpha_0}$-to-$1$, and that $|i(I(\alpha),\delta)-2i(\alpha,\delta)|\leq 2d_{\alpha_0}i(\delta,\delta) = C_\delta$ for every $\alpha\in\Gamma\cdot\alpha_0$ and $\delta\in\FC_m(\Sigma)$ by \cref{Lemme}. Hence,
	\begin{align*}
		\nu_{\alpha_0,L_n}^\Gamma ( \{\widehat{\lambda}\in\widehat{\CM\CL}(\Sigma)| i(\lambda,\gamma)<\epsilon,&  i(\lambda,\delta_0)<R \}) =\frac{\#\{\alpha\in\Gamma\cdot\alpha_0|i(\alpha,\gamma)<\epsilon L_n, i(\alpha,\delta_0)<R L_n \}}{L_n^{6g-6+2r}} \\
		&\leq \frac{\#\{\alpha\in\Gamma\cdot\alpha_0|i(I(\alpha),\gamma)<2\epsilon L_n, i(I(\alpha),\delta_0)\leq2R L_n + C_{\delta_0} \}}{L_n^{6g-6+2r}} \\
		& \leq k_{\alpha_0} \cdot\frac{\#\{\tau\in\Gamma\cdot I(\alpha_0)|i(\tau,\gamma)<2\epsilon L_n \}}{L_n^{6g-6+2r}} \\
		&\leq k_{\alpha_0}\cdot\nu_{I(\alpha_0),L_n}^\Gamma(\{\lambda|i(\lambda,\gamma)\leq 2\epsilon  \}),
	\end{align*}
where $\nu_{\gamma_0,L_n}^\Gamma=\frac{1}{L^{6g-6+2r}} \sum\limits_{\gamma\in\Gamma\cdot\gamma_0} \delta_{\frac{1}{L}\gamma}$ when $\gamma_0\in\FC_m(\Sigma)$. We get from \cite[Theo. 8.1 or Ex. 8.3]{ES} that $(\nu_{I(\alpha_0),L_n}^\Gamma)_{n \in\BN}$ converges and then

	\begin{align*}
		\liminf_n  \nu_{\alpha_0,L_n}^\Gamma ( \{\widehat{\lambda}\in\widehat{\CM\CL}(\Sigma)| i(\lambda,\gamma)<\epsilon,  i(\lambda,\delta_0)&<R \})  \leq k_{\alpha_0} \cdot\liminf_n \nu_{I(\alpha_0),L_n}^\Gamma(\{\lambda|i(\lambda,\gamma)\leq 2\epsilon  \}) \\
		& \leq k_{\alpha_0}\cdot \limsup_n \nu_{I(\alpha_0),L_n}^\Gamma(\{\lambda|i(\lambda,\gamma)\leq 2\epsilon  \})  \\
		& \leq k_{\alpha_0} \cdot\Fc^\Gamma_{g,r}(I(\alpha_0))\cdot\Fm_{Thu}^\Sigma (\{ \lambda\in\CM\CL(\Sigma)| i(\lambda,\gamma)\leq 2\varepsilon \} ).
	\end{align*}

	All in all, $$\Fm(\{\lambda\in\CM\CL(\Sigma)| i(\lambda,\gamma)<\epsilon,  i(\lambda,\delta_0)<R \}) \leq C\cdot\Fm_{Thu}^\Sigma (\{ \lambda\in\CM\CL(\Sigma)| i(\lambda,\gamma)\leq 2\varepsilon \} )$$ 
	and the Lindenstrauss-Mirzakhani characterisation of the Thurston measure proves \cref{CVG}.  
\end{proof}

We are now able to prove our main theorem:

\begin{theo}\label{Main}
	If $\Sigma$ is a compact connected oriented surface with non-empty boundary and negative Euler characteristic which is not a pair of pants, then for every weighted multiarc $\alpha_0 \in \CA_m(\Sigma)$, and every finite index subgroup $\Gamma$ of $\Map(\Sigma)$, there is $ \Fc^\Gamma_{g,r}(\alpha_0)>0$ such that
	\begin{equation*}
	\lim\limits_{L\to\infty}\nu^\Gamma_{\alpha_0,L} =\Fc^\Gamma_{g,r}(\alpha_0)\cdot\widehat{\Fm}_{Thu}^\Sigma,
	\end{equation*}
	and the convergence occurs with respect to the weak* topology on the set of Radon measures on  $\CC(D\Sigma)$.
\end{theo}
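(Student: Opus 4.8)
The plan is to feed the preparatory results into one another. By \cref{propo6.2} the family $(\nu^\Gamma_{\alpha_0,L})_{L>0}$ is precompact, so it suffices to show that every accumulation point equals $\Fc^\Gamma_{g,r}(\alpha_0)\cdot\widehat{\Fm}_{Thu}^\Sigma$, where $\Fc^\Gamma_{g,r}(\alpha_0)$ is the constant produced in \cref{mainTheo1}; then \cref{lemma}, \cref{MesuresAssociees}, \cref{Lemma2} and \cref{generalisationLM} identify each accumulation point up to a scalar, and \cref{mainTheo1} is used one last time to pin down that scalar.

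First I would fix $L_n\to\infty$ and, using \cref{propo6.2}, pass to a subsequence with $\nu^\Gamma_{\alpha_0,L_n}\to\widehat{\Fm}$. By \cref{lemma} and \cref{MesuresAssociees}, $\widehat{\Fm}$ is the hat-pushforward of a measure $\Fm$ on $\CM\CL(\Sigma)$. This $\Fm$ is locally finite, since the hat operator restricts to a proper embedding $\CM\CL(\Sigma)\hookrightarrow\CC(D\Sigma)$ with closed image (properness because $i(\widehat{\lambda},\widehat{\delta_0})=2\,i(\lambda,\delta_0)$ and intersection with a filling curve $\delta_0$ of $\Sigma$ is proper on $\CM\CL(\Sigma)$), and it is $\Gamma$-invariant, since every $\phi\in\Map(\Sigma)$, made the identity on $\partial\Sigma$, extends to $\widehat{\phi}\in\Map(D\Sigma)$ with $\widehat{\phi\cdot\mu}=\widehat{\phi}\cdot\widehat{\mu}$, so each $\nu^\Gamma_{\alpha_0,L}$, and hence $\widehat{\Fm}$ and $\Fm$, is invariant under the image of $\Gamma$. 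By \cref{Lemma2}, $\Fm$ satisfies \cref{mesureNulle}, so \cref{generalisationLM} gives $c\geq 0$ with $\Fm=c\cdot\Fm_{Thu}^\Sigma$, i.e. $\widehat{\Fm}=c\cdot\widehat{\Fm}_{Thu}^\Sigma$.

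It remains to show $c=\Fc^\Gamma_{g,r}(\alpha_0)$, which I would do by testing against $A=\{\mu\in\CC(D\Sigma)\mid i(\mu,\widehat{\delta_0})\leq 2\}$ for $\delta_0$ a filling curve of $\Sigma$. Since $i(\widehat{\alpha},\widehat{\delta_0})=2\,i(\alpha,\delta_0)$ and $i(\widehat{\lambda},\widehat{\delta_0})=2\,i(\lambda,\delta_0)$, for every $n$
\[ \nu^\Gamma_{\alpha_0,L_n}\bigl(\{i(\cdot,\widehat{\delta_0})<2\}\bigr)=\frac{\#\{\alpha\in\Gamma\cdot\alpha_0\mid i(\alpha,\delta_0)<L_n\}}{L_n^{6g-6+2r}}\xrightarrow[n\to\infty]{}\Fc^\Gamma_{g,r}(\alpha_0)\cdot\Fm_{Thu}^\Sigma\bigl(\{i(\delta_0,\cdot)\leq 1\}\bigr) \]
by \cref{mainTheo1}, using that level sets of an intersection function are $\Fm_{Thu}^\Sigma$-null (e.g. from the scaling $\Fm_{Thu}^\Sigma(t\cdot B)=t^{6g-6+2r}\Fm_{Thu}^\Sigma(B)$) and that $0<\Fm_{Thu}^\Sigma(\{i(\delta_0,\cdot)\leq 1\})<\infty$ because $\delta_0$ fills. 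The portmanteau inequality for the open set then yields $c\leq\Fc^\Gamma_{g,r}(\alpha_0)$. For the reverse inequality I would complete $\widehat{\delta_0}$ to a filling multicurve $\Delta_0=\widehat{\delta_0}+s$ of $D\Sigma$ and set $K_T=\{\mu\mid i(\mu,\widehat{\delta_0})\leq 2,\ i(\mu,s)\leq T\}$, which is compact and, its boundary lying in the $\widehat{\Fm}$-null level sets $\{i(\cdot,\widehat{\delta_0})=2\}\cup\{i(\cdot,s)=T\}$, is a continuity set; hence $\nu^\Gamma_{\alpha_0,L_n}(K_T)\to\widehat{\Fm}(K_T)\leq c\cdot\Fm_{Thu}^\Sigma(\{i(\delta_0,\cdot)\leq 1\})$. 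Because $\delta_0$ fills $\Sigma$ there is a constant $C$ with $i(\widehat{\alpha},s)\leq C\,i(\alpha,\delta_0)$ for every arc $\alpha$, so for $T\geq C$ every atom $\widehat{\alpha}/L_n$ of $\nu^\Gamma_{\alpha_0,L_n}$ lying in $A$ already lies in $K_T$, whence $\nu^\Gamma_{\alpha_0,L_n}(K_T)=\nu^\Gamma_{\alpha_0,L_n}(A)\to\Fc^\Gamma_{g,r}(\alpha_0)\cdot\Fm_{Thu}^\Sigma(\{i(\delta_0,\cdot)\leq 1\})$ by \cref{mainTheo1}; this gives $\Fc^\Gamma_{g,r}(\alpha_0)\leq c$. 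Thus $c=\Fc^\Gamma_{g,r}(\alpha_0)$, independently of the subsequence, and with \cref{propo6.2} the whole family converges to $\Fc^\Gamma_{g,r}(\alpha_0)\cdot\widehat{\Fm}_{Thu}^\Sigma$; positivity of $\Fc^\Gamma_{g,r}(\alpha_0)$ is part of \cref{mainTheo1}.

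I expect this last step to be the delicate one: $\widehat{\delta_0}$ never fills $D\Sigma$ — its complement contains collar annuli around the components of $\partial\Sigma$ — so $A$ is not relatively compact and the portmanteau theorem cannot be applied to it directly. The remedy is to exploit that $\widehat{\Fm}$ is carried by $\widehat{\CM\CL}(\Sigma)$ (\cref{lemma}), on which $i(\cdot,\widehat{\delta_0})$ is proper, together with the uniform bound $i(\widehat{\alpha},s)\leq C\,i(\alpha,\delta_0)$ keeping the doubled arcs away from those annular directions; justifying that bound, which is a standard consequence of $\delta_0$ being a filling curve, and the choice of compact continuity sets is where the real care is needed, while everything else is bookkeeping built on \cref{mainTheo1} and \cref{generalisationLM}.
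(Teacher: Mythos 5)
Your overall architecture is exactly the paper's: precompactness (\cref{propo6.2}), identification of each accumulation point as $c\cdot\widehat{\Fm}_{Thu}^\Sigma$ via \cref{lemma}, \cref{MesuresAssociees}, \cref{Lemma2} and \cref{generalisationLM}, and then a second use of \cref{mainTheo1} on the sets $\{i(\cdot,\widehat{\delta_0})\le 2\}$ (the paper uses $\{i(i^+(\delta_0),\cdot)\le 1\}$, which is the same thing up to the factor $2$) to pin down $c$. Your explicit verification that $\Fm$ is locally finite and $\Gamma$-invariant, which \cref{generalisationLM} needs, is a welcome addition the paper leaves implicit. More substantially, the worry you raise at the last step is real and is in fact glossed over in the paper's own proof: since $\widehat{\delta_0}$ does not fill $D\Sigma$, the set $\{i(\cdot,\widehat{\delta_0})\le 2\}$ is not relatively compact in $\CC(D\Sigma)$, and for weak* (vague) convergence of Radon measures the Portmanteau theorem only gives $\liminf_n\nu_n(U)\ge\widehat{\Fm}(U)$ for open $U$ unbounded; the matching upper bound requires ruling out escape of mass to infinity inside that set. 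Your device of intersecting with the compact continuity sets $K_T$ cut out by a completing multicurve $s$ is a correct way to do this, and in that respect your write-up is more careful than the paper's.

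The one genuine soft spot is the inequality you lean on to show $\nu^\Gamma_{\alpha_0,L_n}(K_T)=\nu^\Gamma_{\alpha_0,L_n}(A)$ for $T$ large, namely $i(\widehat{\alpha},s)\le C\,i(\alpha,\delta_0)$ ``for every arc $\alpha$'', justified ``because $\delta_0$ fills $\Sigma$''. As stated this is false: an arc may wind $n$ times around a boundary component $a_i$ inside the peripheral complementary annulus of $\delta_0$, which adds nothing to $i(\alpha,\delta_0)$ (the winding takes place in a collar of $a_i$ disjoint from $\delta_0$) but contributes roughly $n\cdot i(s,a_i)$ to $i(\widehat{\alpha},s)$, since any multicurve $s$ completing $\widehat{\delta_0}$ to a filling multicurve of $D\Sigma$ must cross each $a_i$. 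So no constant works uniformly over all arcs, and filling alone is not the reason the bound holds. What saves you is that you only apply the inequality to arcs in the single orbit $\Gamma\cdot\alpha_0$: the self-intersection number is constant on an orbit, it bounds the amount of winding around boundary components, and then cutting the orthogeodesic of $\alpha$ along its $i(\alpha,\delta_0)$ intersection points with $\delta_0$ and counting crossings of $s$ region by region gives $i(\widehat{\alpha},s)\le C_{\alpha_0}\,i(\alpha,\delta_0)$ with a constant depending on $\alpha_0$ (using $i(\alpha,\delta_0)\ge 1$ for essential arcs). You flag this as the delicate point, which is the right instinct, but the statement should be restricted to the orbit and its proof must invoke the bounded self-intersection, not just the filling property; with that repair the argument is complete.
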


\begin{rmq}
	The constant $\Fc^\Gamma_{g,r}(\alpha_0)$ is the same as in \cref{mainTheo1}.
\end{rmq}

\begin{proof}
	First of all, consider $\widehat{\Fm}$ given by $\widehat{\Fm}=\lim\limits_{n\to\infty}\nu_{\alpha_0,L_n}^\Gamma$. \cref{Lemma2} together with \cref{generalisationLM} ensures that the associated measure $\Fm$ on $\CM\CL(\Sigma)$ is a multiple of the Thurston measure on $\Sigma$ and hence $\widehat{\Fm}=c(L_n)\cdot\widehat{\Fm}_{Thu}^\Sigma$ where $c(L_n)>0$. 

	Let $\delta_0$ be a filling curve of $\Sigma$, the function $i(\delta_0,\cdot)$ is continuous, homogenous and positive on $\CC(\Sigma)$ hence $\Fm_{Thu}^\Sigma(\{i(\delta_0,\cdot)=1 \})=0$. However, $\Fm_{Thu}^\Sigma(\{\lambda\in\CM\CL(\Sigma)|i(\delta_0,\lambda)=1 \})=\widehat{\Fm}_{Thu}^\Sigma(\{\widehat{\lambda}\in\widehat{\CM\CL}(\Sigma)|i(i^+(\delta_0),\widehat{\lambda})=1 \})=\widehat{\Fm}_{Thu}^\Sigma(\D\{\widehat{\lambda}\in\widehat{\CM\CL}(\Sigma)|i(i^+(\delta_0),\widehat{\lambda})\leq1 \})$ and $\CC(D\Sigma)$ is locally compact so by Portmanteau Theorem and \cref{mainTheo1} we obtain the two following results

	\begin{align*}
		\nu_{\alpha_0,L_n}^\Gamma(\{\mu\in\CC(D\Sigma)|i(i^+(\delta_0),\mu)\leq 1 \})&\xrightarrow[n\to\infty]{ } c(L_n)\cdot\widehat{\Fm}_ {Thu}^\Sigma (\{\widehat{\lambda}\in\widehat{\CM\CL}(\Sigma)|i(i^+(\delta_0),\widehat{\lambda})\leq 1 \})\\
		& = c(L_n)\cdot\Fm_ {Thu}^\Sigma (\{\lambda\in\CM\CL(\Sigma)|i(\delta_0,\lambda)\leq 1 \})\\
		& = c(L_n)\cdot\Fm_ {Thu}^\Sigma(\{i(\delta_0,\cdot)\leq 1\}),\\
		\nu_{\alpha_0,L_n}^\Gamma(\{\mu\in\CC(D\Sigma)|i(i^+(\delta_0),\mu)\leq 1 \})&= \frac{1}{L_n^{6g-6+2r}}\#\{\alpha\in\Gamma\cdot\alpha_0 | i(i^+(\delta_0),\widehat{\alpha})\leq L_n \} \\
		& =  \frac{\#\{\alpha\in\Gamma\cdot\alpha_0 | i(\delta_0,\alpha)\leq L_n \} }{L_n^{6g-6+2r}} \\
		& \xrightarrow[n\to\infty]{ } \Fc^\Gamma_{g,r}(\alpha_0)\cdot\Fm_{Thu}^\Sigma(\{i(\delta_0,\cdot)\leq 1\}).
	\end{align*}
	Hence, $C(L_n)=\Fc^\Gamma_{g,r}(\alpha_0)$ does not depend on the sequence $(L_n)_{n\in\BN}$ and whatever the sequence $L_n\xrightarrow[n\to\infty]{ }\infty$, up to passing to a subsequence $n_i$, $$\lim\limits_{i\to\infty} \nu_{\alpha_0,L_{n_i}}^\Gamma = \Fc^\Gamma_{g,r}(\alpha_0)\cdot\widehat{\Fm}_{Thu}^\Sigma.$$
	Since the previous convergence holds for any $L_n\xrightarrow[n \to \infty]{ } \infty$,
	$$	\lim\limits_{L\to\infty} \nu_{\alpha_0,L}^\Gamma =\Fc^\Gamma_{g,r}(\alpha_0)\cdot\widehat{\Fm}_{Thu}^\Sigma.$$
\end{proof}

\section{Application to counting problems}

Armed with \cref{Main} we are now able to focus on counting problems. In this section we are interested in counting the elements in the orbit of a given arc for the action of a finite index subgroup of the mapping class group.

\subsection{Counting bounded arcs}

For $F$ a function on arcs we want to count $\#\{\alpha\in\Gamma\cdot\alpha_0|F(\alpha)\leq L\}$ using  \cref{Main}. To do so, we have to be able to extend $F$ to the currents of $D\Sigma$.

  The more natural examples for $F$ are
	\begin{itemize}
		\item[-] the length function for any Riemannian metric with geodesic boundary on $\Sigma$,
		\item[-] the intersection number with a filling curve $\delta_0$ of $\Sigma$,
		\item[-] the intersection number with a filling current $\mu_0$ of $\Sigma$,
	\end{itemize}
	in that cases, the extension on $\CC(D\Sigma)$ is naturally given by
	\begin{itemize}
		\item[-] the length function associated to the corresponding metric on $D\Sigma$,
		\item[-] the intersection number with $\widehat{\delta_0}$,
		\item[-] the intersection number with $\widehat{\mu_0}$. 
	\end{itemize}
So, we will say that a function $F$ on arcs \textit{extends to currents} if there exists a continuous and homogeneous function on $\CC(D\Sigma)$ whose restriction to the set of arcs is $F$. If this function on currents is also called $F$, it means that $F(\widehat{\alpha})=2F(\alpha)$ for every arc $\alpha$. Since $\CC(\Sigma)\subset \CC(D\Sigma)$ we will say that $F$ is positive if it is a positive function on $\CC(\Sigma)$. Note that for any $\mu\in\CC(\Sigma)$ we also want that $F(\widehat{\mu})=2F(\mu)$.

Now, the same process as in the proof of \cite[Theo 9.1]{ES} allows us to obtain the following Corollary.

\begin{cor} \label{Corollaire}
	Let $\Sigma$ and $\Gamma$ be as in \cref*{Main}. For any weighted multiarc $\alpha_0\in\CA_m(\Sigma)$ and any function $F$ on $\CA_m(\Sigma)$ which extends to a positive function on currents we have 
	$$\lim\limits_{L\to\infty}\frac{\#\{\alpha\in\Gamma\cdot\alpha_0|F(\alpha)\leq L \}}{L^{6g-6+2r}}= \Fc^\Gamma_{g,r}(\alpha_0)\cdot\Fm_{Thu}^\Sigma(\{F(\cdot)\leq 1\}). $$	
\end{cor}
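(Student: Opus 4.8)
The plan is to deduce this from \cref{Main} by a standard "integrate against a continuous test function and pass to the limit" argument, exactly in the spirit of \cite[Theo. 9.1]{ES}. First I would rewrite the counting quantity as an evaluation of the measure $\nu^\Gamma_{\alpha_0,L}$ on a sublevel set of $F$ on $\CC(D\Sigma)$: since $F$ extends to a continuous homogeneous function on $\CC(D\Sigma)$ with $F(\widehat\alpha)=2F(\alpha)$, we have
\begin{align*}
\frac{\#\{\alpha\in\Gamma\cdot\alpha_0\mid F(\alpha)\le L\}}{L^{6g-6+2r}}
&= \nu^\Gamma_{\alpha_0,L}\bigl(\{\mu\in\CC(D\Sigma)\mid F(\mu)\le 2\}\bigr).
\end{align*}
Here I am using homogeneity of $F$ to turn $F(\widehat\alpha)\le 2L$ into $F(\tfrac1L\widehat\alpha)\le 2$.

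Next I would check that the set $B=\{\mu\in\CC(D\Sigma)\mid F(\mu)\le 2\}$ is a continuity set for the limit measure $\Fc^\Gamma_{g,r}(\alpha_0)\cdot\widehat{\Fm}^\Sigma_{Thu}$, i.e. that the latter gives zero mass to the boundary $\{F=2\}$. Since $\widehat{\Fm}^\Sigma_{Thu}$ is supported on $\widehat{\CM\CL}(\Sigma)$, which is the pushforward of $\CM\CL(\Sigma)$ under the hat operator, and since by hypothesis $F$ restricted to $\CC(\Sigma)$ is positive, continuous and homogeneous, the set $\{\lambda\in\CM\CL(\Sigma)\mid F(\lambda)=1\}$ has Thurston measure zero (a positive homogeneous function has sphere of zero Lebesgue-type measure — this is the same fact invoked for $i(\delta_0,\cdot)$ in the proof of \cref{Main}). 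Transporting through the hat operator and using $F(\widehat\mu)=2F(\mu)$ gives $\widehat{\Fm}^\Sigma_{Thu}(\{F=2\})=0$. One also needs local finiteness of the limit near $B$, but $B$ is compact: positivity of $F$ on $\CC(\Sigma)\supset\CM\CL(\Sigma)$ and properness of positive homogeneous functions on currents (cf. \cite{ES}) make $B\cap\widehat{\CM\CL}(\Sigma)$ compact, so $\widehat{\Fm}^\Sigma_{Thu}(B)<\infty$.

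Then the Portmanteau theorem applied to the weak* convergence $\nu^\Gamma_{\alpha_0,L}\to\Fc^\Gamma_{g,r}(\alpha_0)\cdot\widehat{\Fm}^\Sigma_{Thu}$ from \cref{Main} yields
\begin{align*}
\lim_{L\to\infty}\nu^\Gamma_{\alpha_0,L}(B)
&= \Fc^\Gamma_{g,r}(\alpha_0)\cdot\widehat{\Fm}^\Sigma_{Thu}(B)
= \Fc^\Gamma_{g,r}(\alpha_0)\cdot\Fm^\Sigma_{Thu}\bigl(\{\lambda\mid F(\lambda)\le 1\}\bigr),
\end{align*}
where the last equality uses $\widehat{\Fm}^\Sigma_{Thu}(B)=\Fm^\Sigma_{Thu}(\{F(\widehat\lambda)\le 2\})=\Fm^\Sigma_{Thu}(\{F(\lambda)\le 1\})$ by the definition of the pushforward and $F(\widehat\lambda)=2F(\lambda)$. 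This is exactly the claimed formula.

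The main obstacle, and the only point requiring care, is the Portmanteau step in the noncompact setting: weak* convergence of Radon measures on the locally compact but noncompact space $\CC(D\Sigma)$ controls integrals of compactly supported continuous functions, and one must confine everything to a compact set before invoking Portmanteau. The remedy is precisely the compactness of $B=\{F\le 2\}$ inside the support $\widehat{\CM\CL}(\Sigma)$, together with a cutoff argument: choose a filling current (or curve) $\delta_0$ of $\Sigma$ and a continuous bump function equal to $1$ on $B$ and supported in $\{i(\widehat{\delta_0},\cdot)\le R\}$ for $R$ large, exactly as in the proof of \cref{propo6.2}; \cref{mainTheo1} guarantees uniform control of the mass of $\nu^\Gamma_{\alpha_0,L}$ on such sets, so no mass escapes to infinity and Portmanteau applies on the relevant compact region. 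Everything else — rewriting the count, the zero-boundary-measure computation, and the pushforward bookkeeping — is routine.
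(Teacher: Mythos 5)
Your proposal is correct and follows essentially the same route as the paper: rewrite the count as $\nu^\Gamma_{\alpha_0,L}(\{F\le 2\})$ via homogeneity, show the boundary $\{F=2\}$ is null for $\widehat{\Fm}^\Sigma_{Thu}$ using positivity and homogeneity of $F$ on $\CC(\Sigma)$, and conclude by Portmanteau applied to the convergence in \cref{Main}. Your extra discussion of compactness and the cutoff needed to justify Portmanteau on the locally compact but noncompact space $\CC(D\Sigma)$ is a point the paper treats more tersely, but it is the same argument.
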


\begin{proof}
	First of all, since $F$ is continuous on $\CC(D\Sigma)$, $$\partial\{ \mu\in\CC(D\Sigma)|F(\mu)\leq 2 \}\subset\{ \mu\in\CC(D\Sigma)|F(\mu)= 2 \}$$ and as $F$ is positive, continuous and homogeneous on $\CC(\Sigma)$
	\begin{align} \label{Bord0}
		\widehat{\Fm}_{Thu}^\Sigma(\{ \mu\in\CC(D\Sigma)|F(\mu)= 2 \})&=\widehat{\Fm}_{Thu}^\Sigma(\{ \widehat{\lambda}\in\widehat{\CM\CL}(\Sigma)|F(\widehat{\lambda})= 2 \}) \\
		& =\Fm_{Thu}^\Sigma(\{ \lambda\in\CM\CL(\Sigma)|F(\widehat{\lambda})= 2 \}) \notag\\
		& =\Fm_{Thu}^\Sigma(\{ \lambda\in\CM\CL(\Sigma)|F(\lambda)= 1 \}) \notag\\
		&=\Fm_{Thu}^\Sigma(\{ \mu\in\CC(\Sigma)|F(\mu)= 1 \}) \notag\\
		&=0 \notag.
	\end{align}
As a consequence, $\CC(D\Sigma)$ being locally compact, the Portmanteau Theorem together with \cref{Bord0} and \cref{Main} ensures that 
\begin{align} \label{CCl}
	\nu_{\alpha_0,L}^\Gamma(\{ \mu\in\CC(D\Sigma)|F(\mu)\leq 2 \}) & \xrightarrow[L \to \infty]{ } \Fc^\Gamma_{g,r}(\alpha_0)\cdot\widehat{\Fm}_{Thu}^\Sigma(\{ \mu\in\CC(D\Sigma)|F(\mu)\leq 2 \}) \\
	& = \Fc^\Gamma_{g,r}(\alpha_0)\cdot\Fm_{Thu}^\Sigma(\{\lambda\in\CM\CL(\Sigma|F(\lambda)\leq 1 \}). \notag
\end{align}
 Moreover, since $F$ is homogeneous we deduce that $$\frac{\#\{\alpha\in\Gamma\cdot\alpha_0|F(\alpha)\leq L \}}{L^{6g-6+2r}}=\frac{\#\{\alpha\in\Gamma\cdot\alpha_0|F(\widehat{\alpha})\leq 2L \}}{L^{6g-6+2r}} = \nu_{\alpha_0,L}^\Gamma(\{ \mu\in\CC(D\Sigma)|F(\mu)\leq 2 \}),$$  and \cref{CCl} concludes the proof.

\end{proof}

\subsection{Counting bi-infinite arcs}

We work now on a non-compact surface $S$ of finite type. More concretely, $S$ has finite genus, finitely many punctures and empty boundary. If $X$ is a fixed finite area hyperbolic structure on $S$ and $\alpha_0$ a bi-infinite arc between two cusps of $S$ we want to determine 
\begin{equation*}
	\#\{\alpha\in\Map(S)\cdot\alpha_0|\ell_X(\alpha)\leq L\}.
\end{equation*}
 To do so, we first have to choose a way to define $\ell_X(\alpha)$. Indeed, with the natural notion of length every bi-infinite arc has infinite length.

 \begin{rmq}
	In the following, by bi-infinite arc we mean bi-infinite geodesic between two cusps of $S$.
 \end{rmq}

 In such a surface one can define the \textit{peripheral self-intersection number} $i_{per}(\gamma,\gamma)$ of a geodesic $\gamma$. This number tells us how much each excursion of the curve into a cusp intersects itself (see \cite[Def. 2.6]{ES} or \cite{MT} for details on the peripheral self-intersection number). The number of self intersections of an excursion being in direct link with the depth reached by this excursion into a cusp \cite{BPT} \cite{MT}, knowing the peripheral self-intersection of a bi-infinite arc we know exactly the maximal depth reached by any finite excursion (it is an excursion that does not leave all compact subsets of the surface) into a cusp's neighborhood. 
 
 \begin{rmq}
	For a weighted multiarc, we define the peripheral self-intersection number as the maximal peripheral self-intersection number of its components.
 \end{rmq}
 In the line of \cite[Prop. 2.2]{MT} we obtain the following lemma.

 \begin{lem}
	Let $S$ be a finite type surface with negative Euler characteristic, no boundary components and finitely many cusps. If $\gamma$ is a bi-infinite arc of $S$ with $i_{per}(\gamma,\gamma)>0$ then the finite excursions of $\gamma$ stay in the compact core of $S$ bounded by the horospheres of length $1/k$ if and only if $i_{per}(\gamma,\gamma)\leq 4k$.
 \end{lem}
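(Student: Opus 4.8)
The plan is to relate the combinatorics of peripheral self-intersections directly to the hyperbolic geometry of cusp excursions. First I would fix the standard model: near each cusp the surface $S$ looks like a quotient of a horoball by a parabolic $z \mapsto z+1$, so that the horocycle of length $1/k$ bounds a horoball neighborhood whose depth is measured by the imaginary coordinate. The key geometric input, which I would cite from \cite{BPT} or \cite{MT}, is that a geodesic excursion into a cusp that winds $n$ times around the cusp (i.e.\ whose endpoints on the boundary horocycle are separated by roughly $n$ in the parabolic coordinate) rises to a depth corresponding to a horocycle of length comparable to $1/n$, and conversely. So the content of the lemma is the bookkeeping between ``number of times an excursion winds'' and ``number of self-intersections that excursion contributes,'' together with the factor-of-$4$ normalization.

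The main combinatorial step is to show that an excursion winding $n$ times contributes exactly the right count to $i_{per}(\gamma,\gamma)$. An excursion that wraps $n$ times around the cusp, when projected down, crosses itself; a standard count (the same one underlying \cite[Prop. 2.2]{MT} and the picture in \cite{ES}) gives that such an excursion has peripheral self-intersection number roughly $2n$ with itself, but one must be careful about whether endpoints sit symmetrically and about the two strands of a bi-infinite arc entering the \emph{same} cusp possibly interacting. Since $\gamma$ is a single bi-infinite geodesic between two cusps, each of its two ends gives one infinite excursion (which one ignores: infinite excursions are not finite excursions) and then finitely many finite excursions in between; the peripheral self-intersection number is the sum over finite excursions of their self-contributions. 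I would argue that the deepest finite excursion is the one that winds the most, say $n_{\max}$ times, and that $i_{per}(\gamma,\gamma) = 2 n_{\max}$ up to the convention (or $i_{per}(\gamma,\gamma)$ equals twice the largest winding number, hence the $4k$ on the right once one also accounts for the length-$1/k$ versus length-$1/(2k)$ normalization of the bounding horocycle).

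Then the equivalence falls out: the finite excursions of $\gamma$ all stay inside the compact core cut off by horocycles of length $1/k$ exactly when no finite excursion winds enough to pierce that horocycle, i.e.\ when $n_{\max} \le 2k$, which by the identification $i_{per}(\gamma,\gamma) = 2 n_{\max}$ reads $i_{per}(\gamma,\gamma) \le 4k$. I would present this as: (i) set up the cusp coordinate and the depth-vs-winding dictionary; (ii) compute the self-intersection contribution of an $n$-winding excursion; (iii) identify $i_{per}(\gamma,\gamma)$ with twice the maximal winding number among finite excursions; (iv) translate the horocycle-length condition into a bound on the maximal winding number and combine.

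The hard part will be step (ii)--(iii): pinning down the exact constant rather than a coarse comparison. The precise relationship between the length of the horocycle a geodesic excursion penetrates and the number of self-crossings it generates requires being careful about basepoints on the horocycle, about excursions that enter and exit on ``the same side,'' and about the precise definition of $i_{per}$ (whether it counts crossings of the excursion with itself, or also with the rest of $\gamma$ inside the cusp region). I expect to handle this by reducing to the concrete $\mathbb{H}^2$ picture: lift an excursion to a geodesic in the upper half-plane with endpoints on $\mathbb{R}$, translate its $\mathbb{Z}$-orbit, and literally count intersection points of the geodesic with its integer translates that project into the relevant region, which is an elementary but delicate lattice-point/interval count. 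Once that count is done exactly, the stated ``if and only if'' with the constant $4k$ is immediate.
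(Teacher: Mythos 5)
The paper does not actually prove this lemma: it is stated without a proof environment, as a direct adaptation of \cite[Prop.\ 2.2]{MT} (with \cite{BPT} in the background), so there is no argument in the text to compare yours against. Your plan is the standard self-contained route one would take: work in the upper half-plane with the parabolic $z\mapsto z+1$, note that an excursion lifting to a semicircle with feet $a<b$ reaches height $(b-a)/2$ while the horocycle of length $1/k$ sits at height $k$, and count intersections of the semicircle with its integer translates (the translate by $n\neq 0$ links it iff $|n|<b-a$). Your bookkeeping $i_{per}=2n_{\max}$, $n_{\max}\le 2k$ is consistent with the factor $4$ decomposing as $2$ (winding versus depth, $b-a=2h$) times $2$ (each crossing seen from both translates $\pm n$, i.e.\ the ordered/doubled count built into the definition of $i_{per}$). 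So the approach is sound and, if completed, would give more than the paper itself records.

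The gap is exactly the one you flag, and it is not cosmetic: the lemma is an exact iff with an exact constant, and your argument only delivers ``roughly $2n$'' and ``up to the convention.'' Two things must be nailed down. First, the precise definition of $i_{per}$ from \cite[Def.\ 2.6]{ES}: whether it is the maximum over finite excursions of that excursion's self-crossings in the cusp neighborhood, whether crossings are counted once or twice, and whether crossings of a finite excursion with the two \emph{infinite} ends of $\gamma$ (which may enter the same cusp) are included --- without fixing this, the constant $4$ cannot be verified. Second, the integer-rounding issue in the lattice count: the number of linking translates is $\lceil b-a\rceil-1$, not $b-a$, so the clean equivalence ``stays below height $k$ $\iff$ count $\le 4k$'' requires checking the boundary cases (e.g.\ $b-a\in(2k,2k+1]$) against the actual definition; a priori the naive count only gives ``$\le 4k$'' $\Rightarrow$ ``height $\le k+\tfrac12$.'' Until those two points are resolved the iff is not established; everything else in your outline (ignoring the infinite excursions, taking the max over finite ones, the depth-versus-horocycle-length dictionary) is correct.
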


 Since the peripheral self-intersection number is stable through the action of $\Map(S)$ we have a natural way to associate a finite length to each infinite arc and that definition will be relevant if we want to count the elements in a given orbit of the mapping class group.

 \begin{rmq}
	We need the notion of length we will define for bi-infinite arcs to be compatible with the length of the measured laminations of the surface. To do so, note that for any hyperbolic metric $X$ on $S$ the support of every $\lambda\in\CM\CL(S)$ is included in $X^1$, the compact core of $X$ bounded by the horospheres of length $1$.
 \end{rmq}

 \begin{definition}
	Let $S$ be a finite type surface with negative Euler characteristic, no boundary and finitely many cusps. For a fixed hyperbolic structure, we define the \textbf{compact length} of a bi-infinite arc $\alpha$ of $S$ by
	\begin{align*}
		\overline{\ell}_X(\alpha) := \left\{
			\begin{array}{ll}
				\ell_X(\gamma\cap{X^{i_{per}(\alpha,\alpha)/4}})\quad &\text{if} \quad {i_{per}(\alpha,\alpha)/4>1} \\
				 \ell_X(\gamma\cap{X^1})\quad &\text{otherwise.}
			\end{array}
		\right.
	\end{align*}

	Where for any $k\geq 1$, $X^k$ is the compact core of $X$ bounded by the embedded horospheres of length $1/k$.
 \end{definition}
	
 \begin{figure}[!ht]
	\centering
	\includegraphics[scale=0.3]{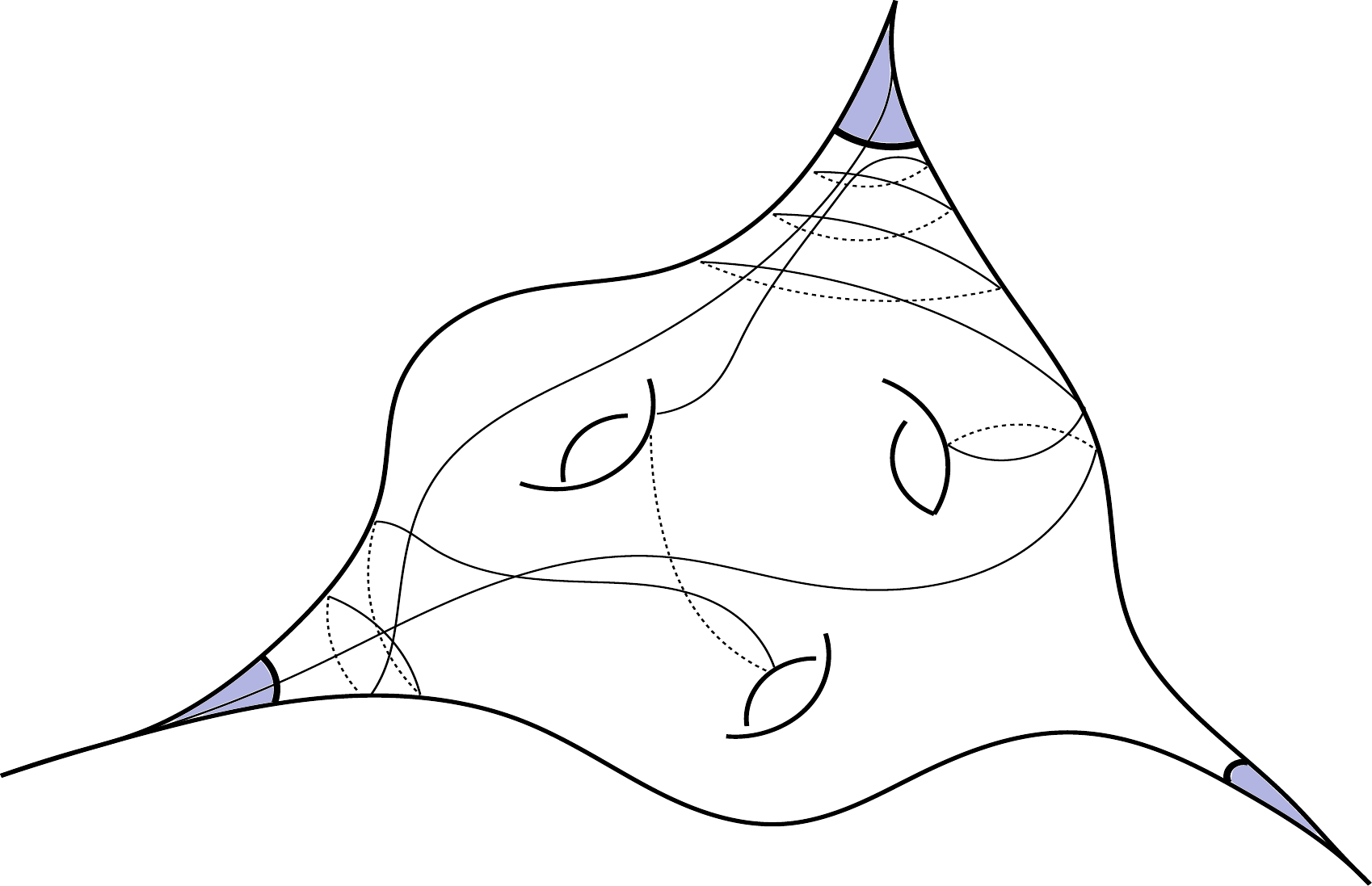}  
	\caption{How to compute $\overline{\ell}$.}
	\label{Troncature}
\end{figure}

 \begin{theo}\label{CuspToCusp}
	Let $S$ be a connected oriented surface with $r>0$ punctures and negative Euler characteristic but not a pair of pants. For any hyperbolic structure $X$ on $S$, if $\alpha_0$ a weighted bi-infinite multiarc and $\Gamma$ is a finite index subgroup of $\Map(S)$ then
	\begin{equation*}
		\lim\limits_{L\to\infty}\frac{\#\{\alpha\in\Gamma\cdot\alpha_0|\overline{\ell}_X(\alpha)\leq L \}}{L^{6g-6+2r}}= \Fc^\Gamma_{g,r}(\gamma_0)\cdot\Fm_{Thu}^S(\{\ell_X(\cdot)\leq 1\}).
	\end{equation*}
 \end{theo}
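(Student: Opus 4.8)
The plan is to reduce \cref{CuspToCusp} to the compact-surface case \cref{Corollaire} by excising a fixed horoball neighborhood of each cusp and working in the resulting compact surface with boundary. Concretely, fix the hyperbolic structure $X$ and let $\Sigma \subset S$ be the compact core $X^1$ bounded by the horocycles of length $1$; it is a compact connected oriented surface with $r>0$ boundary components, negative Euler characteristic, and is not a pair of pants, so it satisfies the hypotheses of \cref{Corollaire}. The homotopy equivalence $\Sigma \hookrightarrow S$ identifies $\Map(S)$ with $\Map(\Sigma)$ (mapping classes of $S$ permute the punctures, equivalently the boundary components of $\Sigma$), and it sends $\Gamma$ to a finite index subgroup of $\Map(\Sigma)$. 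Each bi-infinite geodesic arc $\alpha$ between two cusps of $S$ determines, by the discussion preceding the statement, a well-defined maximal excursion depth governed by $i_{per}(\alpha,\alpha)$, and hence a compact sub-arc $\alpha \cap X^{k(\alpha)}$ where $k(\alpha)=\max(1, i_{per}(\alpha,\alpha)/4)$; this compact sub-arc is an orthogeodesic arc in the \emph{variable} compact surface $X^{k(\alpha)}$, not in the fixed $\Sigma$, which is the first technical point to handle.

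The key steps, in order, are as follows. First I would show that $i_{per}(\alpha,\alpha)$ is constant on the orbit $\Gamma\cdot\alpha_0$ (indeed on the whole $\Map(S)$-orbit, since peripheral self-intersection is a topological invariant), so that the truncation level $k_0 := k(\alpha_0)$ is the same for every $\alpha$ in the orbit; this is exactly the stability remark made before the Definition. Consequently $\overline{\ell}_X(\alpha) = \ell_X(\alpha \cap X^{k_0})$ for \emph{every} $\alpha \in \Gamma\cdot\alpha_0$, with one fixed $k_0$. Second, I would pass to the compact surface $\Sigma_0 := X^{k_0}$ (still satisfying the hypotheses of \cref{Corollaire}, provided $k_0$ is chosen so the horoballs are embedded and disjoint, which is part of the setup), and observe that $\alpha \mapsto \alpha \cap \Sigma_0$ is a bijection between $\Gamma\cdot\alpha_0 \subset \CA(S)$ and the orbit $\Gamma\cdot(\alpha_0\cap\Sigma_0) \subset \CA(\Sigma_0)$: a bi-infinite geodesic between two cusps is uniquely recovered from its compact truncation by extending radially into the two horoball cusps. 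Third, I would identify $F := \overline{\ell}_X$, transported to $\CA(\Sigma_0)$, with the hyperbolic length function $\ell_{X|\Sigma_0}$ of the truncated metric on $\Sigma_0$, up to the additive constant coming from the two cusp excursions between the horocycle of length $1$ (where laminations live) and the horocycle of length $1/k_0$ (where we truncate) — but actually since we truncate at $X^{k_0}$ and measure length of $\alpha\cap X^{k_0}$, the function on arcs is \emph{exactly} $\ell_{X|\Sigma_0}$, which extends to currents on $D\Sigma_0$ as the doubled hyperbolic length, a positive continuous homogeneous function. Fourth, I would apply \cref{Corollaire} on $\Sigma_0$ with this $F$ to conclude
\begin{equation*}
\lim_{L\to\infty}\frac{\#\{\alpha\in\Gamma\cdot\alpha_0|\overline{\ell}_X(\alpha)\leq L\}}{L^{6g-6+2r}} = \Fc^\Gamma_{g,r}(\alpha_0)\cdot\Fm_{Thu}^{\Sigma_0}(\{\ell_{X|\Sigma_0}(\cdot)\leq 1\}).
\end{equation*}
Finally I would match the right-hand side to the claimed $\Fc^\Gamma_{g,r}(\gamma_0)\cdot\Fm_{Thu}^S(\{\ell_X(\cdot)\leq 1\})$: the Thurston measure is a topological object, so $\Fm_{Thu}^{\Sigma_0}$ and $\Fm_{Thu}^S$ agree under the identification $\CM\CL(\Sigma_0) \cong \CM\CL(S)$, and on the support of the Thurston measure (laminations, which lie in $X^1 \subset X^{k_0}$) one has $\ell_{X|\Sigma_0}(\lambda) = \ell_X(\lambda)$, so the two sublevel-set measures coincide.

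The main obstacle I anticipate is the third step: carefully verifying that the "compact length" $\overline{\ell}_X(\alpha)$, defined via the excursion-depth truncation level $i_{per}(\alpha,\alpha)/4$, really coincides with the honest hyperbolic length of the geodesic arc $\alpha\cap\Sigma_0$ in the truncated compact surface $\Sigma_0$ — and, more delicately, that under the homotopy $\Sigma_0 \hookrightarrow S$ the orthogeodesic representative in $\Sigma_0$ of the homotopy class of $\alpha\cap\Sigma_0$ is genuinely $\alpha\cap\Sigma_0$ itself (an orthogeodesic between two horocycle boundary components of a cusped hyperbolic surface is the truncation of the complete bi-infinite geodesic in the homotopy class). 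One also must be careful that distinct arcs in $\Gamma\cdot\alpha_0$ do not have the same truncation — this follows from the bijection in step two — and that the truncation is an \emph{arc} in $\Sigma_0$ in the sense of \cref{Corollaire}, i.e.\ not homotopic into the boundary, which holds because $\alpha$ is a genuine bi-infinite geodesic and not a once-around-a-puncture loop. The remaining identifications (invariance of $i_{per}$ under $\Gamma$, topological invariance of $\Fm_{Thu}$, and $\ell_{X|\Sigma_0} = \ell_X$ on laminations) are routine once the setup is in place.
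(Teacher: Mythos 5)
Your overall strategy --- truncate at the orbit-invariant level $k_0$ determined by $i_{per}(\alpha_0,\alpha_0)$, pass to the compact core $\Sigma_0=X^{k_0}$, identify $\overline{\ell}_X$ with the length of the truncated orthogeodesic arc, and feed this into \cref{Corollaire} --- is exactly the paper's. But there is a genuine gap at your third step, and it sits precisely where the paper has to do real work. The boundary of $\Sigma_0$ consists of horocycles, which are \emph{not} geodesics of the hyperbolic metric, so $\ell_{X|\Sigma_0}$ is not ``the length function of a Riemannian metric with geodesic boundary'' in the sense of the examples following \cref{Corollaire}, and the ``doubled hyperbolic length'' on $D\Sigma_0$ that you invoke does not exist as a Riemannian (let alone hyperbolic) metric: doubling along a curve of geodesic curvature $1$ only produces a length metric with negative curvature concentrated along the gluing locus. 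The hypothesis of \cref{Corollaire} that $F$ extends to a \emph{continuous}, positive, homogeneous function on $\CC(D\Sigma_0)$ is therefore not something you can get for free from the Riemannian case; it is the one point of the proof that genuinely needs an argument, and your proposal asserts it rather than proving it.

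The paper closes this gap as follows: $\widetilde X$ truncated along the lifted horoballs is a CAT($-1$) space in which the boundary horocycles are geodesics (\cite[Ex.~1.16 p.~168]{BH}), so the doubled metric on $D\Sigma_0$ is again CAT($-1$) by the gluing theorem \cite[Thm.~11.1 p.~347]{BH}; in a CAT($-1$) space the length of a closed geodesic equals the stable translation length of the corresponding element of $\pi_1(D\Sigma_0)$, and by \cite[Thm.~1.5]{EPS} the stable length of a discrete cocompact isometric action of a torsion-free hyperbolic group on a geodesic metric space extends to a continuous, positive, homogeneous function on currents. Some argument of this kind is indispensable; without it your fourth step does not go through. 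The remaining points you raise --- constancy of $i_{per}$ along the orbit, orthogonality of the two infinite ends to the horocycles, the bijection between $\Gamma\cdot\alpha_0$ and the orbit of the truncated arc, and the matching of $\Fm_{Thu}^{\Sigma_0}$ with $\Fm_{Thu}^{S}$ on laminations supported in $X^1$ --- are correct and consistent with the paper, which treats them briefly for the same reasons you give.
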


 \begin{proof}
	If $S$ has genus $g$ and $r$ cusps then we call $\Sigma$ the compact surface of genus $g$ with $r$ boundary components. From nowon $X$ is a fixed hyperbolic structure on $S$ and we want to construct a metric on $\Sigma$ from $X$.

	Fix the bi-infinite multiarc $\alpha_0$, there is $k>0$ such that $\overline{\ell}_X(\alpha) := \ell_X(\gamma\cap{X^k})$. If we cut $S$ along the embedded horospheres of length $1/k$ then we obtain a $CAT(-1)$ metric structure on $\widetilde{\Sigma}$ (see \cite[Ex. 1.16 p168]{BH}) for which the horosphere boundaries are geodesic, hence the associated gluing metric on $D\Sigma$ given by the corresponding length function $\ell_{D\Sigma}$ is also $CAT(-1)$ (see \cite[Theo. 11.1 p347]{BH}) on $\widetilde{D\Sigma}$.

	In a $CAT(-1)$ space the length and the stable length coincide hence the length of curves $\ell_{D\Sigma}$ coming from $X$ is equal to the stable length for the action $\pi_1(D\Sigma)\curvearrowright \widetilde{D\Sigma}$. However, the stable length for any discrete and cocompact isometric action of a torsion-free hyperbolic group on a geodesic metric space extends to a continuous, positive and homogeneous function on currents (see \cite[Theo. 1.5]{EPS}). Hence, \cref{Corollaire} applied with $F=\overline{\ell}_X$ which extends to $\ell_{D\Sigma}$ and for the measured laminations the different notions of length coincide and that concludes the proof.
 \end{proof}

 \begin{rmq}
	There are many ways to decide how to truncate an infinite arc in order to take an interest in its length. See for example \cite{Bell22} or \cite{Parlier} for other ways to do so.
	For example, $\ell^t_X$ is the length function on infinite arcs such that the length of a cusps-to-cusps arc is the length of this arc beteween the first time it enters $X^t$ and the last time it leaves it. The advantage of this definition is that it does not depend on the chosen arc and is more visual in the universal cover (\cref*{TroncatureT} shows how to see $\ell^t_X$ in the universal cover).

	\begin{figure}[!ht]
		\centering
		\includegraphics[scale=0.4]{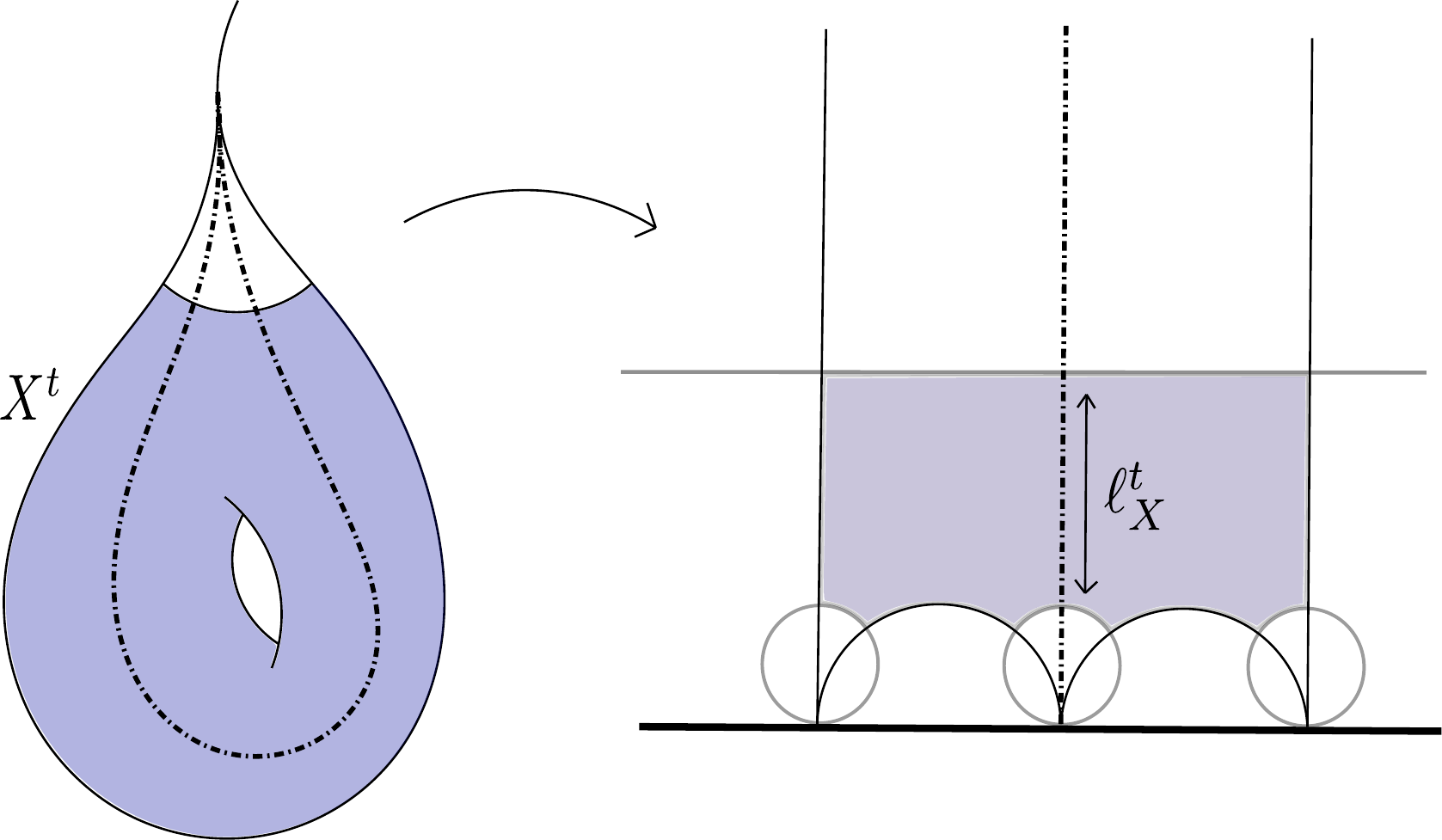}  
		\caption{How to compute $\ell_X^t$}
		\label{TroncatureT}
	\end{figure}
	
	 This notion of length differs from $\overline{\ell}_X$ by a constant hence as an immediate corollary of \cref{CuspToCusp} we have that for all $t\geq 1$ and any infinite weighted multiarc $\alpha_0$
	\begin{equation*}
		\lim\limits_{L\to\infty}\frac{\#\{\alpha\in\Map(\Sigma)\cdot\alpha_0|\ell^t_X(\alpha)\leq L \}}{L^{6g-6+2r}}= \Fc^\Gamma_{g,r}(\alpha_0)\cdot\Fm_{Thu}^S(\{\ell_X(\cdot)\leq 1\}).
	\end{equation*}
 \end{rmq}

 \subsection{Counting arcs on orbifolds}
We now work on a compact orientable orbifold $O$ rather than on $\Sigma$ or $S$. We denote by $g$ its genus and $r$ the number of boundary components and singularities, assuming that it has non-empty boundary. As for surfaces, we will assume that $(g,r)\neq (0,3)$.

One can define $\CC^{or}(O)$ the set of geodesic currents for $O$, and a notion of Thurston measure in $\CC^{or}(O)$ (see \cite{ES3}). In the line of the known results for curves it appears \cite{ES3} that for every $\gamma_0\in\FC_m(O)$ and $\Gamma$ finite index subgroup of $\Map^{or}(O)$ there is a positive constant $\Fc^\Gamma_{g,r}(\gamma_0)$ such that 
\begin{align} \label{CvgMesureOrbifold}
	\lim\limits_{L\to\infty}\frac{1}{L^{6g-6+2r}}\sum\limits_{\gamma\in\Gamma\cdot\gamma_0}\delta_{\frac{1}{L}\gamma} = \Fc^\Gamma_{g,r}(\gamma_0)\cdot \Fm_{Thu}^O,
\end{align}
 where the convergence occurs with respect to the weak* topology on the set of Radon measures on the set of geodesic currents of $O$.
As a consequence, for every continuous, homogeneous and positive function $F$ on $\CC^{or}(O)$,
\begin{align} \label{CountingCurvesOrbifolds}
	\lim\limits_{L\to\infty}\frac{\#\{\gamma\in\Gamma\cdot\gamma_0|F(\gamma)\leq L \}}{L^{6g-6+2r}}= \Fc^\Gamma_{g,r}(\gamma_0)\cdot\Fm_{Thu}^O(\{F(\cdot)\leq 1\}). 
\end{align}

This naturally raises the question of applying the results of this paper to the case of orbifolds: 
\begin{enumerate}
	\item Fuschian groups are LERF \cite{scott} so \cref{Lemme} is still true, 
	\item \cref{CountingCurvesOrbifolds} ensures that we still have \cref{mainTheo1} for orbifolds,
	\item for a compact orbifold the set of geodesisc currents is still locally compact \cite[Section 4.1]{ES3} so with the same proof as in the case of surfaces, \cref{propo6.2} happens in the orbifold case,
	\item the same caracterisations of measured laminations as in the surface case are true for orbifolds what ensures that \cref{lemma} is still true,
	\item the Thurston measure on $O$ can be seen as the pushforward for some application of the Thurston measure on the surface associated to $O$ \cite[Lem. 4.1]{ES3}, which ensures that Lindenstrauss-Mirzakhani characterisation of the Thurston measure and \cref{generalisationLM} are true for orbifolds,
	\item finally, \cref{CvgMesureOrbifold} implies that we are able to prove \cref{Lemma2} for $O$.
\end{enumerate}
All the constructions of this paper apply in the orbifold case which gives us a version of \cref{Main} and \cref{Corollaire} for orbifolds.

\begin{theo}\label{MainOrbifold}
	
	If $O$ is a compact, connected, oriented orbifold with non-empty boundary such that $(g,r)\neq(0,3)$, and $\Gamma$ is a finite index subgroup of $\Map^{or}(O)$ then for every $\alpha_0 \in \CA_m(O)$ weighted multiarc 
\begin{equation*}
	\lim\limits_{L\to\infty}\nu^\Gamma_{\alpha_0,L} = \Fc^\Gamma_{g,r}(\alpha_0)\cdot\widehat{\Fm}_{Thu}^O.
\end{equation*}
The convergence occurs with respect to the weak* topology on the set of Radon measures on $\CC^{or}(DO)$ and $\Fc^\Gamma_{g,r}(\alpha_0)$ is a constant comming from \cref{mainTheo1} and \cite{ES3}.
\end{theo}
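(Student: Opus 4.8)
The plan is to follow the blueprint laid out in the numbered list preceding the statement, which essentially says: every ingredient of the proof of \cref{Main} has an orbifold analogue. Concretely, I would structure the argument as a transfer of the proof of \cref{Main} to the orbifold setting, checking one by one that each step goes through. First I would fix a hyperbolic structure with geodesic boundary on $O$, double it to obtain $DO$ (an orbifold without boundary), and define the measures $\nu^\Gamma_{\alpha_0,L}$ on $\CC^{or}(DO)$ exactly as before via the doubling operator $\widehat{\cdot}$; the same bijection between arcs of $O$ and symmetric curves of $DO$, and the characterisation of $\widehat{\CM\CL}(O)$ in terms of conditions (1)--(2) of \cref{caracterisation}, hold verbatim since they are topological in nature.

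Next I would verify the four pillars. (i) \cref{Lemme} and \cref{kalpha0}: since Fuchsian groups (and hence orbifold fundamental groups) are LERF \cite{scott}, the immersed pair of pants $P$ associated to an arc still lifts to an embedded pair of pants in a finite cover of $O$, so the intersection-number estimate $|i(I(\alpha),\mu)-2i(\alpha,\mu)|\le 2d_\alpha i(\mu,\mu)$ and the finite-to-one property of $I$ carry over; one must be slightly careful that the pair of pants and the arc avoid the cone points, but one may always choose the arc's geodesic representative to miss the orbifold locus, or absorb cone points into the cover. (ii) \cref{mainTheo1}: this rests on the curve-counting statement for orbifolds, which is \cref{CountingCurvesOrbifolds}, applied to the filling multicurve $\mu$; the $\limsup/\liminf$ squeeze argument is then formally identical. (iii) \cref{propo6.2} and \cref{lemma}: local compactness of $\CC^{or}(DO)$ \cite[Section 4.1]{ES3} gives the Heine--Borel property, and the self-intersection/boundary-intersection vanishing computation uses only \cref{mainTheo1} (now established for $O$) plus the bilinearity and continuity of $i(\cdot,\cdot)$, which hold for orbifold currents. (iv) The Lindenstrauss--Mirzakhani step: by \cite[Lem. 4.1]{ES3} the Thurston measure on $O$ is a pushforward of the Thurston measure on an associated honest surface, so the characterisation of $\Map^{or}$-invariant measures on $\CM\CL(O)$ transfers, and the finite-index upgrade \cref{generalisationLM} goes through unchanged (ergodicity of the Thurston measure under finite-index subgroups being inherited through the pushforward as well). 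Finally \cref{Lemma2} holds for $O$ because its proof only invokes \cref{kalpha0}, \cref{Lemme}, and the curve-counting convergence, i.e.\ \cref{CvgMesureOrbifold}.

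With all pillars in place, the endgame is verbatim the proof of \cref{Main}: every accumulation point $\widehat{\Fm}$ of $(\nu^\Gamma_{\alpha_0,L})$ is supported on $\widehat{\CM\CL}(O)$, its associated measure on $\CM\CL(O)$ is $\Map^{or}$-invariant (via \cref{generalisationLM}) and satisfies the vanishing hypothesis (via \cref{Lemma2}), hence equals $c\cdot\Fm^O_{Thu}$, so $\widehat{\Fm}=c\cdot\widehat{\Fm}^O_{Thu}$; testing against the set $\{i(i^+(\delta_0),\cdot)\le 1\}$ for a filling curve $\delta_0$ of $O$ and using \cref{mainTheo1} pins down $c=\Fc^\Gamma_{g,r}(\alpha_0)$ independently of the subsequence, giving full convergence in the weak* topology on Radon measures on $\CC^{or}(DO)$. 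I expect the only genuine friction to be bookkeeping around the orbifold locus: making sure that the pair-of-pants lifting argument in \cref{Lemme}, the identification of symmetric measured laminations of $DO$ not charging $\partial O$, and the local compactness/metrizability needed for Portmanteau all interact correctly with cone points. Everything else is a mechanical re-run of Sections 3--4 with $\Sigma$ replaced by $O$ and the references to \cite{ES} replaced by \cite{ES3}; I would phrase the proof as exactly such a dictionary rather than reproducing the computations.
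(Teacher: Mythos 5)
Your proposal is correct and follows essentially the same route as the paper, which itself proves \cref{MainOrbifold} exactly by the itemized transfer argument you describe: LERF for Fuchsian groups gives \cref{Lemme}, \cref{CountingCurvesOrbifolds} gives \cref{mainTheo1}, local compactness of $\CC^{or}(DO)$ gives \cref{propo6.2}, the pushforward description of $\Fm_{Thu}^O$ gives the Lindenstrauss--Mirzakhani step, and the endgame of \cref{Main} is rerun verbatim. Your extra attention to the interaction with the orbifold locus is a reasonable refinement but does not change the argument.
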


\begin{cor} \label{CorollaireOrbifold}
	With the same conditions as above, for any function $F$ on $\CA_m(O)$ which extends to a positive function on currents we have 
	$$\lim\limits_{L\to\infty}\frac{\#\{\alpha\in\Gamma\cdot\alpha_0|F(\alpha)\leq L \}}{L^{6g-6+2r}}= \Fc^\Gamma_{g,r}(\alpha_0)\cdot\Fm_{Thu}^O(\{F(\cdot)\leq 1\}). $$
\end{cor}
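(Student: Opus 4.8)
The plan is to reduce \cref{CorollaireOrbifold} to \cref{MainOrbifold} by exactly the same argument that derives \cref{Corollaire} from \cref{Main}, so essentially no new idea is needed beyond checking that the ingredients survive the passage from surfaces to orbifolds. Concretely, suppose $F$ is a function on $\CA_m(O)$ extending to a continuous homogeneous function on $\CC^{or}(DO)$ which is positive on $\CC^{or}(O)$, normalised so that $F(\widehat{\alpha})=2F(\alpha)$ for every arc and $F(\widehat{\mu})=2F(\mu)$ for every current $\mu\in\CC^{or}(O)$. First I would observe, using homogeneity of $F$, that
\begin{equation*}
\frac{\#\{\alpha\in\Gamma\cdot\alpha_0\mid F(\alpha)\le L\}}{L^{6g-6+2r}}=\frac{\#\{\alpha\in\Gamma\cdot\alpha_0\mid F(\widehat{\alpha})\le 2L\}}{L^{6g-6+2r}}=\nu^\Gamma_{\alpha_0,L}(\{\mu\in\CC^{or}(DO)\mid F(\mu)\le 2\}).
\end{equation*}

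Next I would check that the set $\{\mu\mid F(\mu)\le 2\}$ is a $\widehat{\Fm}_{Thu}^O$-continuity set, i.e.\ that its topological boundary has measure zero. Since $F$ is continuous on $\CC^{or}(DO)$, that boundary is contained in $\{F(\mu)=2\}$, and since $\widehat{\Fm}_{Thu}^O$ is supported on $\widehat{\CM\CL}(O)$ one has, using $F(\widehat\lambda)=2F(\lambda)$ and the fact that $\widehat{\Fm}_{Thu}^O$ is the pushforward of $\Fm_{Thu}^O$ under the hat operator,
\begin{equation*}
\widehat{\Fm}_{Thu}^O(\{F(\mu)=2\})=\Fm_{Thu}^O(\{\lambda\in\CM\CL(O)\mid F(\lambda)=1\})=\Fm_{Thu}^O(\{\mu\in\CC^{or}(O)\mid F(\mu)=1\}).
\end{equation*}
This last quantity vanishes because $F$ is positive, continuous and homogeneous on $\CC^{or}(O)$: the level set $\{F=1\}$ is a ``sphere'' for a homogeneous function, hence its preimage under scaling foliates $\CC^{or}(O)\setminus\{0\}$, and the Thurston measure — being scale-homogeneous of positive degree — gives zero mass to any single level set. (I would cite the analogous fact already used in the proof of \cref{Corollaire} and in \cite{ES}.) Then, $\CC^{or}(DO)$ being locally compact by \cite[Section 4.1]{ES3}, the Portmanteau theorem together with \cref{MainOrbifold} gives
\begin{equation*}
\nu^\Gamma_{\alpha_0,L}(\{\mu\mid F(\mu)\le 2\})\xrightarrow[L\to\infty]{}\Fc^\Gamma_{g,r}(\alpha_0)\cdot\widehat{\Fm}_{Thu}^O(\{\mu\mid F(\mu)\le 2\})=\Fc^\Gamma_{g,r}(\alpha_0)\cdot\Fm_{Thu}^O(\{F(\cdot)\le 1\}),
\end{equation*}
where the last equality again uses that $\widehat{\Fm}_{Thu}^O$ is the hat-pushforward of $\Fm_{Thu}^O$ and that $F(\widehat\lambda)=2F(\lambda)$. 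Combining with the first displayed identity yields the claim.

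The only points that are genuinely specific to orbifolds — and hence the ``main obstacle'', though a mild one — are: (i) that $\CC^{or}(DO)$ is locally compact so that Portmanteau applies and \cref{MainOrbifold} makes sense, which is granted by \cite[Section 4.1]{ES3}; (ii) that the orbifold Thurston measure $\widehat{\Fm}_{Thu}^O$ really is the pushforward under $\widehat{\cdot}$ of $\Fm_{Thu}^O$ and that the latter gives zero mass to the ``sphere'' $\{F=1\}$, which follows from the description of the orbifold Thurston measure in \cite[Lem. 4.1]{ES3} together with homogeneity; and (iii) that $\widehat{\Fm}_{Thu}^O$ is supported on $\widehat{\CM\CL}(O)$, which is part of the content of \cref{MainOrbifold} as established via the orbifold analogues of \cref{propo6.2}, \cref{lemma} and \cref{generalisationLM} already listed in items (3)--(6) above. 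Once these are in hand the proof is word-for-word the surface proof of \cref{Corollaire}.
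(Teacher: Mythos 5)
Your proposal is correct and follows exactly the route the paper intends: the orbifold corollary is deduced from \cref{MainOrbifold} by repeating, word for word, the Portmanteau-theorem argument used to derive \cref{Corollaire} from \cref{Main}, with the continuity-set check $\Fm_{Thu}^O(\{F(\cdot)=1\})=0$ coming from positivity, continuity and homogeneity of $F$ together with the scale-homogeneity of the Thurston measure. The orbifold-specific ingredients you flag (local compactness of $\CC^{or}(DO)$ and the pushforward description of $\widehat{\Fm}_{Thu}^O$) are precisely the ones the paper cites from \cite{ES3}.
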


Here, the notion of extension of a function is the same as in \cref*{Corollaire}. In the line of what we have done for surfaces, it is also possible to count bi-infinite arcs in non-compact orbifolds.

\bibliographystyle{plain}
\bibliography{bibliography.bib}

\end{document}